\documentclass{article}
\usepackage{xcolor} 
\usepackage{amsmath,amsthm,amsmath,amsfonts,amssymb,geometry}
\usepackage{graphicx} 
\usepackage[authoryear]{natbib}
\usepackage{bbm,mathrsfs,subfigure,titlesec,lipsum,enumerate,bm,here,multicol,authblk,comment}
\usepackage{microtype}

\makeatother

\def\Prob{{\mathrm{Pr}}}
\def\E{\mathrm{E}}
\def\var{{\mathrm{Var}}}
\def\bm{\boldsymbol}

\numberwithin{equation}{section}
\theoremstyle{plain}
\newtheorem{theorem}{Theorem}[section]
\newtheorem{lemma}[theorem]{Lemma}

\newtheorem{corollary}[theorem]{Corollary}
\theoremstyle{definition}

\newtheorem{assumption}{Assumption}
\newtheorem{remark}{Remark}
\newtheorem{example}{Example}

\title{High-dimensional linear regression inference\\ via $\ell^2$ weak convergence}
\author[1]{Kou Fujimori}
\author[2]{Koji Tsukuda}
\affil[1]{Faculty of Economics and Law, Shinshu University.}
\affil[2]{Faculty of Mathematics, Kyushu University.}
\date{}

\begin{document}
\maketitle

\begin{abstract}
We prove weak convergence in a separable Hilbert space for estimators of high-dimensional regression coefficients, which yields asymptotic normality and enables direct use of standard asymptotic tools such as the continuous mapping theorem.
The approach permits diverging sparsity with many small nonzero coefficients, while requiring that only finitely many have moderate magnitude.
As applications, we develop a test for finitely many linear hypotheses and, via a Scheff\'{e}-type approach, simultaneous inference for infinitely many linear hypotheses, yielding both a global test and simultaneous confidence bands for the regression function.
The limiting distributions are given by weighted sums of independent chi-squared variables, and plug-in critical values achieve asymptotically correct size.
\end{abstract}

\noindent
\begin{list}{}{
  \setlength{\leftmargin}{-5pt} 
  \setlength{\rightmargin}{-5pt} 
}
\item[] \textls[-25]{\textbf{Keywords:} high-dimensional inference; linear regression; simultaneous inference; sparsity; weak convergence.}\\
\end{list}

\section{Introduction}\label{sec:intro}

We develop a new framework for establishing the asymptotic normality of estimators for high-dimensional regression coefficients in the linear model via weak convergence of probability measures on the Hilbert space $\ell^2$, thereby enabling the use of standard asymptotic tools for broad classes of continuous functionals.
As applications, we develop a global test for finitely many linear hypotheses and, via a Scheff\'{e}-type approach, simultaneous inference for infinitely many linear hypotheses, yielding both global tests and confidence bands for the regression function.
Existing results typically assume fixed sparsity, namely that the number of nonzero coefficients does not grow, or restrict attention to low-dimensional linear functionals.
We relax these restrictions and allow the sparsity level to diverge; in particular, the number of nonzero coefficients of small magnitude may increase with the sample size.
For clarity of exposition, we focus on linear regression with independent and identically distributed (i.i.d.) sub-Gaussian covariates and errors, and defer all proofs to the final section.

Numerous methods for high-dimensional sparse estimation have been proposed, including the least absolute shrinkage and selection operator (lasso)~\citep{tibshirani1996}, the adaptive lasso~\citep{zou2006}, the Dantzig selector~\citep{candes2007dantzig}, and losses with nonconvex penalties such as the smoothly clipped absolute deviation and the minimax concave penalty~\citep{fan2001,zhang2010}.
Foundational results include oracle inequalities and support-recovery guarantees~\citep{bickel2009simultaneous,wainwright2009}.
On the inferential side, post-selection procedures have been analyzed~\citep{belloni2013least}; separately, two-step estimators for stochastic process models have been developed~\citep{fujimori2026two}; and de-biasing methods yield asymptotically normal estimators for low-dimensional functionals~\citep{zhangzhang2014,vandegeer2014,javanmard2014}.

Accordingly, an extensive literature has developed non-asymptotic, high-dimensional central limit theorems and Gaussian approximations under metrics such as the Kolmogorov and Wasserstein distances.
Representative results include Gaussian approximations for maxima of sums of centered independent random vectors~\citep{chernozhukov2013gaussian,chernozhukov2017central}, Stein kernel–based approximations accommodating exchangeable pairs and certain nonlinear statistics~\citep{fang2021high}, rate refinements~\citep{koike2023high}, and extensions to convex polytopes and degenerate cases~\citep{fang2023high}.

Rather than deriving metric‑specific non-asymptotic bounds, we establish weak convergence of a high-dimensional estimator in $\ell^2$, bringing standard asymptotic tools such as the continuous mapping theorem to bear on a broad class of continuous functionals. 
Coordinatewise normality and normality of fixed‑dimensional linear functionals follow immediately.
Moreover, coordinatewise normality cannot capture the dependence structure needed for multiple testing or for global tests, including those on the $\ell^2$ norm or linear hypotheses.
By contrast, weak convergence in $\ell^2$ characterizes the joint limiting distribution through the underlying covariance operator, thereby naturally accommodating these features.
We illustrate the framework by constructing a test of linear hypotheses, contrasting it with projection‑based procedures that accommodate dense nuisance parameters~\citep{zhu2018linear} and hypothesis‑adaptive tests designed for high power~\citep{zhang2025novel}; our approach handles nonsparsity via a different route.
Furthermore, we study simultaneous testing of infinitely many linear hypotheses via a Scheff\'{e}-type approach, which also yields simultaneous confidence bands for the regression function.
This exact limit theory is particularly valuable for inference under diverging sparsity levels, a regime where non-asymptotic bounds often become intractable.

Throughout the paper, we use the following notation.
We write $\mathbb{N} = \{1,2,\ldots\}$ and $[k] = \{1,\ldots,k\}$ for $k \in \mathbb{N}$.
For $q\in \mathbb{N} \cup \{\infty\}$, write $\|\boldsymbol{v}\|_q$ for the $\ell^q$ norm: 
for $\boldsymbol{v} = (v_1,\ldots,v_p)^\top \in \mathbb{R}^p$ with $p \in \mathbb{N}$,
$\|\boldsymbol{v}\|_q = (\sum_{j=1}^p |v_j|^q )^{1/q}$ $(q<\infty)$, $\|\boldsymbol{v}\|_\infty = \max_{1\le j\le p} |v_j|$.
For an infinite sequence $\boldsymbol{v}=(v_j)_{j\ge1}\in\ell^q$, the same formulae apply.
For a bounded linear operator $\boldsymbol{A}: \ell^2 \to \ell^2$, or equivalently a matrix $\boldsymbol{A}$ in finite dimensions, let
$\|\boldsymbol{A}\|_{\mathrm{op}} = \sup_{\|\boldsymbol{x}\|_2=1}\|\boldsymbol{A}\boldsymbol{x}\|_2$
denote the $\ell^2$‑induced operator norm.
For a vector $\boldsymbol{v}=(v_j)_{j\ge 1}$ and an index set $T\subset\mathbb{N}$ with $|T|<\infty$, write $\boldsymbol{v}_T = (v_j)_{j\in T}$, which is viewed as an element of $\mathbb{R}^{|T|}$. 
In the finite‑dimensional case $\boldsymbol{v}\in\mathbb{R}^p$, we restrict $T\subset[p]$; for sequences in $\ell^2$, we allow $T\subset \mathbb{N}$.
Similarly, for a $p \times p$ matrix $\boldsymbol{A} = (A_{ij})_{i,j \in [p]}$ and index sets $T, T' \subset [p]$, write $\boldsymbol{A}_{T, T'} = (A_{ij})_{i \in T, j \in T'}$.
For a random variable $X$, we write the sub-Gaussian norm of $X$, if it exists, as $\|X\|_{\psi_2}$, that is, $\|X\|_{\psi_2} = \inf\{ C>0: \E [\exp ({X^2}/{C^2}) ]\leq 2\}$.
For a $d$-dimensional random vector $\boldsymbol{X}$, the sub-Gaussian norm of $\boldsymbol{X}$ is 
$\|\boldsymbol{X}\|_{\psi_2}
= \sup_{\|\boldsymbol{u}\|_2 =1 } \| \boldsymbol{u}^\top \boldsymbol{X} \|_{\psi_2}$.
For $d \in \mathbb{N}$, $\boldsymbol{\mu} \in \mathbb{R}^d$, and $\boldsymbol{V} \in \mathbb{R}^{d \times d}$ positive semi‑definite, we write $\mathcal{N}_d(\boldsymbol{\mu}, \boldsymbol{V})$ for the $d$-dimensional normal distribution with mean $\boldsymbol{\mu}$ and covariance $\boldsymbol{V}$, with the subscript omitted when $d=1$.
For random elements $X$ and $Y$ and a sequence $\{X_n\}_{n \in \mathbb{N}}$ of random elements, 
we write $X =^d Y$ if $X$ and $Y$ have identical distributions, and we write $X_n \to^d X$ and $X_n \to^p X$ for convergence in distribution and in probability as $n \to \infty$, respectively.

\section{Model setup and main result}\label{sec:model}

For each $n \in \mathbb{N}$, we work with the triangular array $\{(Y_{i,n}, \boldsymbol{Z}_{i,n}^{\top} )\}_{i \in [n]}$ satisfying
\begin{equation*}
Y_{i,n} = \boldsymbol{\theta}_n^\top \boldsymbol{Z}_{i,n} + \epsilon_{i,n}, \quad i \in [n],
\end{equation*}
where $\boldsymbol{\theta}_n \in \mathbb{R}^{p_n}$ and $\boldsymbol{Z}_{i,n} =(Z_{i1,n}, \ldots, Z_{i p_n,n})^\top \in \mathbb{R}^{p_n}$ for $i \in [n]$.
Assume that 
\[ (\boldsymbol{Z}_{1,n}^{\top}, \epsilon_{1,n}), \ldots, (\boldsymbol{Z}_{n,n}^{\top}, \epsilon_{n,n}) \]
are i.i.d.\ with $\E(\boldsymbol{Z}_{1,n}) = \boldsymbol{0}$, $\E(\epsilon_{1,n}) = 0$, $\var(\epsilon_{1,n}) = \sigma^2 > 0$, and $\boldsymbol{Z}_{1,n}$ is independent of $\epsilon_{1,n}$.
Let $\boldsymbol{\theta}_{0,n} = (\theta_{01,n}, \ldots, \theta_{0p_n,n})^\top$ denote the true parameter vector.
Define the index sets
\begin{align*}
&T_\gamma := \{ j \in [p_n] : |\theta_{0j,n}| > \gamma \},\\
&T_{0,n} := \{ j \in [p_n] : \theta_{0j,n} \neq 0 \},\\
&T_{W,n} := \{ j \in [p_n] : \theta_{\min,n} \le |\theta_{0j,n}| \le \eta_n \},
\end{align*}
where
$\gamma > 0$ is a fixed constant independent of $n$ and $p_n$, $\theta_{\min,n} := \min_{j \in T_{0,n}} |\theta_{0j,n}|$, and $\eta_n$ is a positive sequence depending on $n$.
We write $s_{0,n} := |T_{0,n}|$ for the support size, which may grow with $n$, and $s_\gamma := |T_\gamma|$ for the number of coefficients of larger than $\gamma$ in magnitude.
Unless otherwise stated, throughout this paper, all limits are taken as $n \to \infty$, allowing $p_n$ to diverge.

\begin{assumption}\label{assump:sparsity}
\begin{enumerate}
\item[(i)]
$T_\gamma$ is fixed and independent of $n$ and $p_n$.
\item[(ii)]
$T_{0,n} \setminus T_\gamma =  T_{W,n}$.
\item[(iii)]
There exists $\theta^* >0$, independent of $n$ and $p$, such that 
$\| \boldsymbol{\theta}_{0,n} \|_{\infty} \leq \theta^*$.
\item[(iv)]
$\sigma^2$ is fixed and independent of $n$ and $p_n$.
\end{enumerate}
\end{assumption}

Hereafter, we suppress the subscript $n$ when no confusion can arise.

\begin{remark}
By Assumption~\ref{assump:sparsity}-(i), the number $s_\gamma$ of non-negligible coefficients remains fixed.
Assumption~\ref{assump:sparsity}-(ii) allows for a diverging number of coefficients outside $T_\gamma$, while their magnitude is controlled by Assumption~\ref{assump:energy}.
\end{remark}

Let $\hat{\boldsymbol{\theta}}_n = (\hat\theta_{n1},\ldots,\hat\theta_{np})^\top$ be a sparse estimator for $\boldsymbol{\theta}$ such that $\Prob (\|\hat{\boldsymbol{\theta}}_n - \boldsymbol{\theta}_{0,n} \|_\infty
> r_n )\to 0$ for some rate $r_n \to 0$.
Given a threshold sequence $\{ \tau_n \}_{n \in \mathbb{N}}$, define the sequence $\{ \hat{T}_n \}_{n \in \mathbb{N}}$ of an estimator for $T_\gamma$ by
\[
\hat{T}_n  := \{j \in [p_n] : |\hat{\theta}_{nj}| > \tau_n\}.
\]

\begin{remark}
Under suitable conditions, the lasso estimator and the Dantzig selector obey the high-probability bound $\|\hat{\boldsymbol{\theta}}_n - \boldsymbol{\theta}_{0,n} \|_\infty \leq C n^{-1/2} (\log{p_n})^{1/2}$ for some $C>0$.
In such cases, one may take $r_n \asymp n^{-1/2} (\log p_n)^{1/2}$.
\end{remark}


\begin{assumption}\label{assump:selcons}
$r_n + \eta_n < \tau_n < \gamma - r_n$.
\end{assumption}

\begin{lemma} \label{lem:selection}
Let Assumptions~\ref{assump:sparsity}--\ref{assump:selcons} hold.
Then, 
\[ \Prob(\hat{T}_n = T_\gamma) \to 1.\]
\end{lemma}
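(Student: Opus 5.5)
The plan is to work on the event $E_n := \{\|\hat{\boldsymbol{\theta}}_n - \boldsymbol{\theta}_{0,n}\|_\infty \le r_n\}$. By the defining property of $\hat{\boldsymbol{\theta}}_n$, $\Prob(E_n)\to 1$. It then suffices to show the deterministic inclusion $E_n \subset \{\hat T_n = T_\gamma\}$ for all sufficiently large $n$; taking probabilities then gives the claim. I will establish the two inclusions $T_\gamma \subset \hat T_n$ and $\hat T_n \subset T_\gamma$ on $E_n$ separately.

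\textbf{Step 1: $T_\gamma \subset \hat T_n$.} For $j \in T_\gamma$ we have $|\theta_{0j}|>\gamma$. On $E_n$ the triangle inequality gives $|\hat\theta_{nj}| \ge |\theta_{0j}| - r_n > \gamma - r_n$. By Assumption~\ref{assump:selcons}, $\gamma - r_n > \tau_n$, so $|\hat\theta_{nj}| > \tau_n$ and hence $j\in\hat T_n$.

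\textbf{Step 2: $\hat T_n \subset T_\gamma$.} Take $j \notin T_\gamma$ and split into two cases. If $j \notin T_{0,n}$, then $\theta_{0j}=0$, and on $E_n$ we get $|\hat\theta_{nj}|\le r_n < r_n+\eta_n < \tau_n$, using $\eta_n>0$. If $j \in T_{0,n}\setminus T_\gamma$, Assumption~\ref{assump:sparsity}-(ii) gives $j\in T_{W,n}$, so $|\theta_{0j}|\le\eta_n$. Then $|\hat\theta_{nj}|\le \eta_n + r_n < \tau_n$ by Assumption~\ref{assump:selcons}. In both cases $j\notin\hat T_n$.

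Combining the two steps yields $\Prob(\hat T_n = T_\gamma)\ge \Prob(E_n)\to 1$. Everything here is elementary, and the only point needing care is the case split in Step 2. That split is exactly where Assumption~\ref{assump:sparsity}-(ii) is needed: it guarantees that every nonzero coefficient outside $T_\gamma$ has magnitude at most $\eta_n$, not merely at most $\gamma$. Assumption~\ref{assump:sparsity}-(i) is not needed for this argument, since the inclusions hold for whatever set $T_\gamma$ is at each $n$.
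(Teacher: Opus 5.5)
Your proof is correct and follows essentially the same route as the paper. On the event $\{\|\hat{\boldsymbol{\theta}}_n - \boldsymbol{\theta}_{0,n}\|_\infty \le r_n\}$ you verify both inclusions from Assumption~\ref{assump:selcons}, using the same split $T_\gamma^c = T_{W,n} \cup T_{0,n}^c$ via Assumption~\ref{assump:sparsity}-(ii); the only cosmetic difference is that the paper proves $T_\gamma \subset \hat{T}_n$ by the contrapositive.
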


The post-selection ordinary least squares estimator $\tilde{\boldsymbol{\theta}}_n = (\tilde{\theta}_{n1},\ldots,\tilde{\theta}_{np})^{\top}$ is defined as the solution to the estimating equation
\[
n^{-1} \sum_{i=1}^n \boldsymbol{Z}_{i,n \hat{T}_n}(Y_i - \boldsymbol{\theta}_{\hat{T}_n}^\top \boldsymbol{Z}_{i,n \hat{T}_n}) = \boldsymbol{0},\quad
\boldsymbol{\theta}_{\hat{T}_n^c} = \boldsymbol{0}.
\]
Consider the $\ell^2$-valued random sequence 
$\{\boldsymbol{\mathcal{R}}_n\}_{n \in \mathbb{N}}$ defined by 
\[
\langle \boldsymbol{e}_j, \boldsymbol{\mathcal{R}}_n\rangle
= \begin{cases}
n^{1/2} (\tilde{\theta}_{nj} - \theta_{0j,n} ) & 
(j \in [p_n]) \\
0 & (j \in \mathbb{N} \setminus [p_n])
\end{cases},
\] 
where $\{\boldsymbol{e}_j\}_{j \in \mathbb{N}}$ is the canonical basis of $\mathbb{R}^\infty$. 
Define
\[
\hat{\boldsymbol{J}}_n := n^{-1}\sum_{i=1}^n \boldsymbol{Z}_{i,n} \boldsymbol{Z}_{i,n}^\top,\quad
\boldsymbol{J}_n := \E[\boldsymbol{Z}_{1,n} \boldsymbol{Z}_{1,n}^\top].
\]

\begin{assumption}\label{assump:regularity}
\begin{enumerate}
\item[(i)]
There exists a $s_\gamma \times s_\gamma$ positive definite matrix 
\[ \boldsymbol{J}_{T_\gamma, T_\gamma} = \lim \boldsymbol{J}_{n T_\gamma, T_\gamma}. \]
\item[(ii)]
There exists $\lambda_*$, independent of $n$ and $p_n$, such that 
$0 < \lambda_{*} \leq  \Lambda_{\min}(\boldsymbol{J}_{n T_\gamma, T_\gamma})$.
\item[(iii)]
$\sup_{n \in \mathbb{N}} \max_{j \in [p_n]} \| Z_{1 j,n} \|_{\psi_2} < \infty$.
\item[(iv)]
$\sup_{n \in \mathbb{N}} \| \epsilon_{1 ,n} \|_{\psi_2} < \infty$.
\end{enumerate}
\end{assumption}

\begin{lemma}\label{lem:matrix-approx}
Let Assumptions~\ref{assump:sparsity}-(i) and \ref{assump:regularity}-(ii)(iii) hold.
Then, 
\[ \|\hat{\boldsymbol{J}}_{n T_\gamma, T_\gamma} - \boldsymbol{J}_{n T_\gamma, T_\gamma}\|_{\mathrm{op}}
= o_p(1). \]
\end{lemma}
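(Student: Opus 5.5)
Since $T_\gamma$ is fixed by Assumption~\ref{assump:sparsity}-(i), the matrix $\hat{\boldsymbol{J}}_{n T_\gamma, T_\gamma} - \boldsymbol{J}_{n T_\gamma, T_\gamma}$ has the fixed dimension $s_\gamma \times s_\gamma$. Its operator norm is therefore dominated by its Frobenius norm, which in turn is at most $s_\gamma$ times the maximal entrywise deviation:
\[
\|\hat{\boldsymbol{J}}_{n T_\gamma, T_\gamma} - \boldsymbol{J}_{n T_\gamma, T_\gamma}\|_{\mathrm{op}}
\le s_\gamma \max_{j,k \in T_\gamma} \Bigl| n^{-1}\sum_{i=1}^n \bigl(Z_{ij,n}Z_{ik,n} - \E[Z_{1j,n}Z_{1k,n}]\bigr)\Bigr| .
\]
So the plan is to show that each of the finitely many entries is $o_p(1)$. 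A union over $s_\gamma^2$ terms then costs nothing.

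Fix $j,k \in T_\gamma$. The summands $W_{i,n} := Z_{ij,n}Z_{ik,n} - \E[Z_{1j,n}Z_{1k,n}]$ are i.i.d.\ across $i$ for each $n$ and centered. Since the setting is a triangular array, I would use Chebyshev's inequality rather than a plain law of large numbers:
\[
\Prob\Bigl(\Bigl|n^{-1}\sum_{i=1}^n W_{i,n}\Bigr| > \varepsilon\Bigr) \le \frac{\E[Z_{1j,n}^2 Z_{1k,n}^2]}{n\varepsilon^2}.
\]
By Cauchy--Schwarz, $\E[Z_{1j,n}^2Z_{1k,n}^2] \le (\E Z_{1j,n}^4)^{1/2}(\E Z_{1k,n}^4)^{1/2}$. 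The standard moment bound for sub-Gaussian variables gives $\E Z^4 \le C\|Z\|_{\psi_2}^4$ for a universal constant $C$. Assumption~\ref{assump:regularity}-(iii) then yields a bound $K<\infty$ on these fourth moments that is uniform in $n$ and in $j\in[p_n]$. Hence the right-hand side is $O(1/n)\to0$ for every fixed $\varepsilon>0$.

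Alternatively, one could use Bernstein's inequality for sub-exponential variables, since $\|Z_jZ_k\|_{\psi_1}\le\|Z_j\|_{\psi_2}\|Z_k\|_{\psi_2}$. This gives exponential tails, but it is not needed here because the dimension is fixed.

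The only point requiring care is the uniformity in $n$. The distribution of $\boldsymbol{Z}_{1,n}$ changes with $n$, so classical consistency arguments for a fixed distribution do not apply directly. This is exactly why the sup over $n$ in Assumption~\ref{assump:regularity}-(iii) is needed. Assumption~\ref{assump:regularity}-(ii) is not essential for this bound; it would only matter for later statements about the inverse. Combining the entrywise result with the Frobenius bound above completes the proof.
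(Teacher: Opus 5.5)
Your proposal is correct and follows the paper's approach: the paper's proof is the one-line appeal to the law of large numbers for triangular arrays. Your Chebyshev bound, using the fourth moments bounded uniformly in $n$ by the sub-Gaussian assumption and the reduction to finitely many entries because $T_\gamma$ is fixed, is exactly what justifies that appeal, and you are right that the eigenvalue lower bound in Assumption 3-(ii) is not needed here.
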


\begin{assumption}\label{assump:energy}
$n s_{0,n} \eta_n^2 \to 0$.
\end{assumption}

\begin{remark}
Assumption~\ref{assump:energy} bounds the total squared magnitude of the coefficients outside $T_{\gamma}$.
Together with Assumption~\ref{assump:sparsity}, it implies that $(\boldsymbol{\theta}_{0,n}^{\top},0,0,\ldots)^{\top} \in \ell^2$ uniformly in $n$.
Indeed, since $s_\gamma$ is fixed,
\[ \|\boldsymbol{\theta}_{0,n}\|_2^2
\leq s_\gamma {\theta^*}^2 + s_{0,n} \eta_n^2 =O(1).\]
This condition couples the growth of the support size to the magnitude of the associated coefficients and is invoked in the proof of
Theorem~\ref{thm:ell2 weak} to establish tightness of $\{\boldsymbol{\mathcal{R}}_n\}_{n \in \mathbb{N}}$ in $\ell^2$.
\end{remark}

\begin{theorem}[Asymptotic normality] \label{thm:ell2 weak}
Let Assumptions~\ref{assump:sparsity}--\ref{assump:energy} hold.
Then, 
\[\boldsymbol{\mathcal{R}}_n \to^d \boldsymbol{\mathcal{R}} \quad \text{in} \quad \ell^2, \]
where $\boldsymbol{\mathcal{R}}$ is a centered Gaussian field satisfying
$\langle \boldsymbol{u}, \boldsymbol{\mathcal{R}} \rangle \sim \mathcal{N}(0, \sigma^2\boldsymbol{u}_{T_\gamma}^\top\boldsymbol{J}_{T_\gamma, T_\gamma}^{-1} \boldsymbol{u}_{T_\gamma})$
for every $\boldsymbol{u} \in \ell^2$.
\end{theorem}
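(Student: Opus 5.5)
The plan is to work on the event $\Omega_n := \{\hat{T}_n = T_\gamma\}$, whose probability tends to one by Lemma~\ref{lem:selection}. Differences on $\Omega_n^c$ are then negligible for convergence in distribution. On $\Omega_n$ we have $\tilde{\boldsymbol{\theta}}_{T_\gamma^c} = \boldsymbol{0}$, so I will decompose $\boldsymbol{\mathcal{R}}_n = \boldsymbol{\mathcal{R}}_n^{(1)} + \boldsymbol{\mathcal{R}}_n^{(2)}$.

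The first piece $\boldsymbol{\mathcal{R}}_n^{(1)}$ is supported on the fixed finite set $T_\gamma$. The second is $\boldsymbol{\mathcal{R}}_n^{(2)} = -n^{1/2}(\boldsymbol{\theta}_{0,T_\gamma^c},0,\ldots)$, which is deterministic and supported on $T_{W,n}$ by Assumption~\ref{assump:sparsity}-(ii). Its norm satisfies $\|\boldsymbol{\mathcal{R}}_n^{(2)}\|_2^2 \le n s_{0,n}\eta_n^2 \to 0$ by Assumption~\ref{assump:energy}. By Slutsky's lemma in the separable Hilbert space $\ell^2$, it therefore suffices to show $\boldsymbol{\mathcal{R}}_n^{(1)} \to^d \boldsymbol{\mathcal{R}}$.

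For the first piece, solve the estimating equation on $\Omega_n$. Since $Y_i = \boldsymbol{\theta}_{0,T_\gamma}^\top \boldsymbol{Z}_{i,T_\gamma} + \boldsymbol{\theta}_{0,T_\gamma^c}^\top \boldsymbol{Z}_{i,T_\gamma^c} + \epsilon_i$, we get
\[
n^{1/2}(\tilde{\boldsymbol{\theta}}_{T_\gamma} - \boldsymbol{\theta}_{0,T_\gamma}) = \hat{\boldsymbol{J}}_{n T_\gamma,T_\gamma}^{-1}\Bigl( n^{-1/2}\sum_{i=1}^n \boldsymbol{Z}_{i,T_\gamma}\epsilon_i + n^{-1/2}\sum_{i=1}^n \boldsymbol{Z}_{i,T_\gamma}\boldsymbol{Z}_{i,T_\gamma^c}^\top \boldsymbol{\theta}_{0,T_\gamma^c}\Bigr).
\]
Lemma~\ref{lem:matrix-approx} and Assumption~\ref{assump:regularity}-(ii) give invertibility with probability tending to one and $\hat{\boldsymbol{J}}^{-1}_{nT_\gamma,T_\gamma} \to^p \boldsymbol{J}^{-1}_{T_\gamma,T_\gamma}$, using Assumption~\ref{assump:regularity}-(i).

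The leading term is a sum of i.i.d.\ mean-zero vectors in the fixed dimension $s_\gamma$, with covariance $\sigma^2\boldsymbol{J}_{nT_\gamma,T_\gamma} \to \sigma^2\boldsymbol{J}_{T_\gamma,T_\gamma}$. Because the summands form a triangular array, I will apply the Lindeberg–Feller CLT via Cramér–Wold. The Lyapunov condition follows from uniformly bounded fourth moments, which sub-Gaussianity (Assumption~\ref{assump:regularity}-(iii)(iv)) and independence of $\boldsymbol{Z}$ and $\epsilon$ provide. This yields $\mathcal{N}_{s_\gamma}(\boldsymbol{0},\sigma^2\boldsymbol{J}_{T_\gamma,T_\gamma})$ in the limit, and hence $\mathcal{N}_{s_\gamma}(\boldsymbol{0},\sigma^2\boldsymbol{J}^{-1}_{T_\gamma,T_\gamma})$ after multiplying by the inverse.

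The main obstacle is the omitted-variable bias term $\boldsymbol{b}_n := n^{-1/2}\sum_i \boldsymbol{Z}_{i,T_\gamma}\boldsymbol{Z}_{i,T_\gamma^c}^\top\boldsymbol{\theta}_{0,T_\gamma^c}$. Write $W_i := \boldsymbol{Z}_{i,T_\gamma^c}^\top\boldsymbol{\theta}_{0,T_\gamma^c}$. By the triangle inequality for $\psi_2$-norms, $\|W_i\|_{\psi_2} \le C\|\boldsymbol{\theta}_{0,T_\gamma^c}\|_1 \le C s_{0,n}\eta_n$. Each coordinate of $\boldsymbol{Z}_{i,T_\gamma}W_i$ is then sub-exponential with $\psi_1$-norm $O(s_{0,n}\eta_n)$, so the second moment of the coordinates of $\boldsymbol{b}_n$ is crudely bounded by
\[
\E\bigl[n^{-1/2}\textstyle\sum_i Z_{ij}W_i\bigr]^2 \le n\,\E[Z_{1j}^2W_1^2] \lesssim n s_{0,n}^2\eta_n^2.
\]
Assumption~\ref{assump:energy} only gives $n s_{0,n}\eta_n^2\to0$, so this crude bound falls short by a factor of $s_{0,n}$.

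I will first try to exploit $\mathrm{Var}(W_1) = \boldsymbol{\theta}_{0,T_\gamma^c}^\top \boldsymbol{J}_{n,T_\gamma^c,T_\gamma^c}\boldsymbol{\theta}_{0,T_\gamma^c}$. If an operator-norm bound on $\boldsymbol{J}_n$ is available, this is $\lesssim s_{0,n}\eta_n^2$, which makes the centered part $o_p(1)$. The mean part $n^{1/2}\boldsymbol{J}_{nT_\gamma,T_\gamma^c}\boldsymbol{\theta}_{0,T_\gamma^c}$ is at most $n^{1/2}\|\boldsymbol{J}_n\|_{\mathrm{op}}(s_{0,n}\eta_n^2)^{1/2}\to0$ by Cauchy–Schwarz. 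If no such bound is implied by the stated assumptions, I will use the coordinatewise bound $\|\boldsymbol{J}_{nT_\gamma,T_\gamma^c}\boldsymbol{\theta}\|\le C\|\boldsymbol{\theta}_{T_\gamma^c}\|_1$ and check whether the assumptions can be sharpened. Either way, $\boldsymbol{b}_n = o_p(1)$ is the crux.

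Once this is settled, embedding the finite-dimensional limit into $\ell^2$ is a continuous map, which gives $\boldsymbol{\mathcal{R}}_n^{(1)}\to^d\boldsymbol{\mathcal{R}}$. The limit satisfies $\langle\boldsymbol{u},\boldsymbol{\mathcal{R}}\rangle = \boldsymbol{u}_{T_\gamma}^\top\boldsymbol{\mathcal{R}}_{T_\gamma}$, so its variance is $\sigma^2\boldsymbol{u}_{T_\gamma}^\top\boldsymbol{J}^{-1}_{T_\gamma,T_\gamma}\boldsymbol{u}_{T_\gamma}$, as claimed. Tightness in $\ell^2$ is automatic here, since the random part lives in the fixed coordinates $T_\gamma$ and the remaining mass, which is deterministic, vanishes.
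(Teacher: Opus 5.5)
\textbf{Overall.} Your architecture is the same as the paper's. On $\Omega_n$, the coordinates outside $T_\gamma$ form the deterministic vector $-n^{1/2}\boldsymbol{\theta}_{0,T_\gamma^c}$, whose squared norm is at most $n s_{0,n}\eta_n^2\to 0$. The $T_\gamma$ block is then handled by a fixed-dimensional CLT. The paper checks asymptotic finite-dimensionality plus convergence of every $\langle \boldsymbol{u},\boldsymbol{\mathcal{R}}_n\rangle$, whereas you embed the block limit directly; the two routes are equivalent here.

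\textbf{The gap.} The proposal is not yet a proof. The step you call the crux, $\boldsymbol{b}_n=o_p(1)$, is never established. Moreover, it cannot be established from Assumptions~\ref{assump:sparsity}--\ref{assump:energy} as stated, because the conclusion itself can fail under them. The paper's proof has the same hole, but silently. It starts from the correct identity $n^{1/2}\psi_{nT_\gamma}(\boldsymbol{\theta}_{0T_\gamma})1_{\Omega_n}=\boldsymbol{J}^{-1}_{T_\gamma,T_\gamma}\hat{\boldsymbol{J}}_{nT_\gamma,T_\gamma}n^{1/2}(\tilde{\boldsymbol{\theta}}_n-\boldsymbol{\theta}_0)_{T_\gamma}1_{\Omega_n}$. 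It then passes to a statement with $\epsilon_i$ in place of $Y_i-\boldsymbol{\theta}_{0T_\gamma}^\top\boldsymbol{Z}_{iT_\gamma}=\epsilon_i+W_i$, which amounts to setting your $\boldsymbol{b}_n$ to zero.

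\textbf{Counterexample.} Take $T_\gamma=\{1\}$, $\theta_{01}=1$, $\gamma=1/2$, and $m_n=\lfloor n^{1/2}\rfloor$ weak coefficients all equal to $\eta_n=n^{-7/8}$, so $p_n=m_n+1$. Use Gaussian errors, $Z_{i1}\sim\mathcal{N}(0,1)$, and $Z_{ij}=\rho Z_{i1}+(1-\rho^2)^{1/2}\xi_{ij}$ for $j\in T_{W,n}$, with fixed $\rho\in(0,1)$ and i.i.d.\ $\xi_{ij}\sim\mathcal{N}(0,1)$. Then:
\begin{itemize}
\item every coordinate is standard normal and $\boldsymbol{J}_{nT_\gamma,T_\gamma}=1$;
\item $n s_{0,n}\eta_n^2\asymp n^{-1/4}\to0$;
\item $p_n=o(n)$, $(\boldsymbol{J}_n^{-1})_{11}\asymp m_n$, and the other diagonal entries of $\boldsymbol{J}_n^{-1}$ equal $(1-\rho^2)^{-1}$, so full OLS is an $\ell^\infty$ pilot with $r_n=n^{-1/4}\log n$;
\item with $\tau_n=n^{-1/8}$, all of Assumptions~\ref{assump:sparsity}--\ref{assump:energy} hold.
\end{itemize}
Yet $\E[Z_{11}W_1]=\rho m_n\eta_n$, so on $\Omega_n$
\[
n^{1/2}(\tilde{\theta}_{n1}-1)=(1+o_p(1))\bigl\{\rho\, n^{1/2}m_n\eta_n+O_p(1)\bigr\},\qquad n^{1/2}m_n\eta_n\asymp n^{1/8}\to\infty .
\]

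\textbf{What fixes it.} An extra hypothesis controlling $\boldsymbol{J}_{nT_\gamma,T_\gamma^c}\boldsymbol{\theta}_{0,T_\gamma^c}$, and the fluctuations around it, is needed. A clean choice is $\sup_n\|\boldsymbol{Z}_{1,n}\|_{\psi_2}<\infty$ in the vector sense defined in the paper's notation section. Then $\|W_1\|_{\psi_2}\lesssim\|\boldsymbol{\theta}_{0,T_\gamma^c}\|_2$, and
\[
\E[b_{n,j}^2]=\E[Z_{1j}^2W_1^2]+(n-1)(\E[Z_{1j}W_1])^2\lesssim n\|\boldsymbol{\theta}_{0,T_\gamma^c}\|_2^2\le n s_{0,n}\eta_n^2\to0 .
\]
Alternatively, keep coordinatewise sub-Gaussianity and strengthen Assumption~\ref{assump:energy} to $n s_{0,n}^2\eta_n^2\to0$; your crude second-moment bound already covers that case.

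\textbf{Your operator-norm route.} As written, it is also incomplete. A bound on $\var(W_1)$ handles the mean part $n^{1/2}\boldsymbol{J}_{nT_\gamma,T_\gamma^c}\boldsymbol{\theta}_{0,T_\gamma^c}$. The centered part, however, has variance $\var(Z_{1j}W_1)$. Controlling it requires a fourth-moment bound on $W_1$ in terms of $\|\boldsymbol{\theta}_{0,T_\gamma^c}\|_2$, whereas coordinatewise sub-Gaussianity only gives one in terms of $\|\boldsymbol{\theta}_{0,T_\gamma^c}\|_1$.
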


\begin{remark}
Recall that $\boldsymbol{\mathcal{R}}_n \to^d \boldsymbol{\mathcal{R}}$ in $\ell^2$ means that $\E[f(\boldsymbol{\mathcal{R}}_n)] \to \E[f(\boldsymbol{\mathcal{R}})]$ for every bounded, continuous function $f:\ell^2\to\mathbb{R}$.
This notion is stronger than convergence of linear functionals and identifies the limiting law as a Borel probability measure on $\ell^2$.
\end{remark}

\begin{remark}
\citet{fujimori2026two} analyze asymptotic normality of estimators for high‑dimensional parameters via weak convergence in $\ell^2$, including settings with a possibly infinite‑dimensional nuisance parameter and dependent observations. 
Their analysis is confined to the fixed-sparsity regime, which materially simplifies the arguments.
In contrast, we allow $s_0\to\infty$ while keeping $s_\gamma$ fixed.
\end{remark}

\begin{example}
Let $\Delta > 0$ and consider $\boldsymbol{\theta}_{0,n}$ defined by
\[ \theta_{0j,n} = 1\{1 \leq j \leq s_\gamma \} + n^{-(1+\Delta)/2} 1\{s_\gamma < j \leq s_{0,n} \} \quad (j \in [p_n]). \]
Suppose that $s_{0,n} = o(n^{\Delta})$ and $\liminf n^{1/2} r_n >0$.
Then $r_n + \eta_n \sim r_n$, so Assumption~\ref{assump:selcons} holds for $\tau_n$ satisfying $\tau_n \to 0$ and $r_n/\tau_n \to 0$.
Moreover, Assumption~\ref{assump:energy} holds since $n s_{0,n} \eta_n^2 = s_{0,n} n^{-\Delta} \to 0$.
Consequently, if the remaining assumptions of Theorem~\ref{thm:ell2 weak} hold, then $\boldsymbol{\mathcal{R}}_n \to^d \boldsymbol{\mathcal{R}}$ in $\ell^2$.
\end{example}

\begin{example}
Let $\Delta>0$ and let $\boldsymbol{\theta}_{0,n}$ be as in the previous example.
If $s_{0,n} \asymp n^\Delta$, then $(\boldsymbol{\theta}_{0,n}^\top, 0 , \ldots)^\top \in \ell^2$ but Assumption~\ref{assump:energy} fails.
Assume for contradiction that $\boldsymbol{\mathcal{R}}_n \to^d \boldsymbol{\mathcal{R}}$ in $\ell^2$.
Define $g:\ell^2 \to \mathbb{R}$ by $g(\cdot)= \sum_{j>s_\gamma}\langle \boldsymbol{e}_j, \cdot \rangle^2$.
Then, by the continuous mapping theorem, $g(\boldsymbol{\mathcal{R}}_n)  \to^d g(\boldsymbol{\mathcal{R}})$.
Since $g(\boldsymbol{\mathcal{R}}) = 0$ a.s., it follows that $g(\boldsymbol{\mathcal{R}}_n) \to^d 0$.
On the event $\{\hat{T}_n=T_\gamma\}$ we have $\tilde\theta_{nj}=0$ for $j>s_\gamma$,
and hence $\langle \boldsymbol{e}_j,\boldsymbol{\mathcal{R}}_n\rangle^2 = n^{-\Delta}$
for $s_\gamma<j\le s_{0,n}$.
Therefore, for any $c > 0$,
\begin{align*}
&\Prob \left( g(\boldsymbol{\mathcal{R}}_n) > c \right)  \\
&\geq 
\Prob \left( \sum_{j = s_\gamma + 1}^{s_{0,n} } \langle \boldsymbol{e}_j , \boldsymbol{\mathcal{R}}_n \rangle^2 > c ,  \  \hat{T}_n = T_\gamma  \right) 
= \Prob \left(  (s_{0,n} - s_\gamma) n^{-\Delta} > c, \  \hat{T}_n = T_\gamma \right).
\end{align*}
If $c < \liminf (s_{0,n} / n^{\Delta})$, then Lemma~\ref{lem:selection} yields
\[
\liminf \Prob ( g(\boldsymbol{\mathcal{R}}_n) > c )
\geq \lim \Prob(\hat{T}_n = T_\gamma) = 1,
\]
which contradicts $g(\boldsymbol{\mathcal{R}}_n)  \to^d 0$.
\end{example}

\section{Testing linear hypotheses}\label{sec:appli1}

As a direct application of Theorem~\ref{thm:ell2 weak}, we consider testing linear hypotheses for $\boldsymbol{\theta}$.
Let $q (\leq p)$ be an integer, $\boldsymbol{A} \in \mathbb{R}^{q \times p}$ a given matrix, $\boldsymbol{b} \in \mathbb{R}^q$ a given vector.
Let 
\[ \boldsymbol{\Sigma}_A = \boldsymbol{A}_{[q], T_\gamma} \boldsymbol{J}_{T_\gamma, T_\gamma}^{-1} \boldsymbol{A}_{[q], T_\gamma}^\top .\]
Moreover, let $r$ be the rank of $\boldsymbol{\Sigma}_A$, and let $\Lambda_1 \leq \ldots \leq \Lambda_r$ be the nonzero eigenvalues of $\boldsymbol{\Sigma}_A$.
Since $s_\gamma$ is assumed independent of $n$ and $p$, we have $r \leq \min(s_\gamma, q)$, so that $r$ is fixed.
To test
$H_0: \boldsymbol{A}\boldsymbol{\theta} = \boldsymbol{b}$ against $H_1: \boldsymbol{A}\boldsymbol{\theta} \neq \boldsymbol{b}$, consider
\[
W_n := n \|\boldsymbol{A} \tilde{\boldsymbol{\theta}}_n - \boldsymbol{b}\|_2^2.
\]

\begin{assumption}\label{assump:onq}
\begin{enumerate}
\item[(i)]
$q$ is fixed and independent of $n$ and $p_n$.
\item[(ii)]
$\boldsymbol{A}_{[q], T_\gamma}$ is fixed and independent of $n$ and $p_n$.
\end{enumerate}
\end{assumption}

\begin{corollary}\label{cor:test asymptotic distribution}
Let Assumptions~\ref{assump:sparsity}--\ref{assump:onq} hold.
Under $H_0$,
\[
W_n \to^d \sigma^2 \sum_{j=1}^r \Lambda_j \chi_{1,j}^2 ,
\]
where $\chi_{1,1}^2,\ldots, \chi_{1,r}^2$ are independent and $\chi^2(1)$-distributed random variables.
\end{corollary}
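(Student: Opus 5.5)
The plan is to write $W_n$ as a continuous functional of $\boldsymbol{\mathcal{R}}_n$ plus a negligible remainder, then apply Theorem~\ref{thm:ell2 weak} with the continuous mapping theorem. Under $H_0$ we have $\boldsymbol{A}\boldsymbol{\theta}_{0,n}=\boldsymbol{b}$, so
\[
n^{1/2}(\boldsymbol{A}\tilde{\boldsymbol{\theta}}_n-\boldsymbol{b}) = \boldsymbol{A}\,n^{1/2}(\tilde{\boldsymbol{\theta}}_n-\boldsymbol{\theta}_{0,n}).
\]
I split the columns of $\boldsymbol{A}$ into those indexed by $T_\gamma$ and those indexed by $T_\gamma^c$. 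This gives
\[
n^{1/2}(\boldsymbol{A}\tilde{\boldsymbol{\theta}}_n-\boldsymbol{b}) = \boldsymbol{A}_{[q],T_\gamma}(\boldsymbol{\mathcal{R}}_n)_{T_\gamma} + \boldsymbol{A}_{[q],T_\gamma^c}\,n^{1/2}(\tilde{\boldsymbol{\theta}}_n-\boldsymbol{\theta}_{0,n})_{T_\gamma^c}.
\]

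\textbf{Step 1: the off-support term.} On the event $\{\hat T_n=T_\gamma\}$, whose probability tends to one by Lemma~\ref{lem:selection}, we have $\tilde\theta_{nj}=0$ for $j\notin T_\gamma$. The second term is then $-n^{1/2}\boldsymbol{A}_{[q],T_\gamma^c}(\boldsymbol{\theta}_{0,n})_{T_\gamma^c}$. By Cauchy--Schwarz its norm is at most
\[
\|\boldsymbol{A}_{[q],T_\gamma^c}\|_{\mathrm{op}}\,(n s_{0,n}\eta_n^2)^{1/2},
\]
which tends to zero by Assumption~\ref{assump:energy}, provided $\|\boldsymbol{A}_{[q],T_\gamma^c}\|_{\mathrm{op}}$ stays bounded.

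This is the main obstacle. Assumption~\ref{assump:onq} fixes only $\boldsymbol{A}_{[q],T_\gamma}$, so some control on the remaining columns is needed. I would take the rows of $\boldsymbol{A}$ to be uniformly bounded in $\ell^2$, which is implicit if $\boldsymbol{A}$ is regarded as a bounded operator from $\ell^2$ to $\mathbb{R}^q$. If that is not available, the argument would instead proceed by viewing $\boldsymbol{u}\mapsto\boldsymbol{A}\boldsymbol{u}$ as a continuous map on $\ell^2$ and applying the continuous mapping theorem directly to $\boldsymbol{\mathcal{R}}_n$.

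\textbf{Step 2: the main term.} The map
\[
\boldsymbol{x}\mapsto \|\boldsymbol{A}_{[q],T_\gamma}\boldsymbol{x}_{T_\gamma}\|_2^2
\]
is continuous on $\ell^2$, because $T_\gamma$ is fixed and finite and $\boldsymbol{A}_{[q],T_\gamma}$ is fixed. By Theorem~\ref{thm:ell2 weak} and the continuous mapping theorem, together with Slutsky's lemma for the remainder from Step 1,
\[
W_n\to^d \|\boldsymbol{A}_{[q],T_\gamma}\boldsymbol{\mathcal{R}}_{T_\gamma}\|_2^2 .
\]
Taking $\boldsymbol{u}$ supported on $T_\gamma$ in Theorem~\ref{thm:ell2 weak} and using Cram\'er--Wold, $\boldsymbol{\mathcal{R}}_{T_\gamma}\sim\mathcal{N}_{s_\gamma}(\boldsymbol{0},\sigma^2\boldsymbol{J}_{T_\gamma,T_\gamma}^{-1})$. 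Hence
\[
\boldsymbol{G}:=\boldsymbol{A}_{[q],T_\gamma}\boldsymbol{\mathcal{R}}_{T_\gamma}\sim\mathcal{N}_q(\boldsymbol{0},\sigma^2\boldsymbol{\Sigma}_A).
\]

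\textbf{Step 3: identify the limit.} Write the spectral decomposition $\boldsymbol{\Sigma}_A=\boldsymbol{U}\mathrm{diag}(\Lambda_1,\ldots,\Lambda_r,0,\ldots)\boldsymbol{U}^\top$ with $\boldsymbol{U}$ orthogonal. Then
\[
\|\boldsymbol{G}\|_2^2=\|\boldsymbol{U}^\top\boldsymbol{G}\|_2^2 .
\]
The coordinates of $\boldsymbol{U}^\top\boldsymbol{G}$ are independent $\mathcal{N}(0,\sigma^2\Lambda_j)$ for $j\le r$, and they vanish almost surely for $j>r$. Therefore
\[
\|\boldsymbol{G}\|_2^2 =^d \sigma^2\sum_{j=1}^r\Lambda_j\chi^2_{1,j},
\]
which is the claimed limit.
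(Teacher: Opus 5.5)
Your proof is correct once the condition you flag is added, but it takes a different route from the paper. The paper defines a single operator $\mathcal{T}_A:\ell^2\to\ell^2$ by $\langle\boldsymbol{e}_j,\mathcal{T}_A(\boldsymbol{x})\rangle=\langle\boldsymbol{a}_j,\boldsymbol{x}\rangle$ for $j\in[q]$. It applies the continuous mapping theorem to the whole sequence $\boldsymbol{\mathcal{R}}_n$ to get $\mathcal{T}_A(\boldsymbol{\mathcal{R}}_n)\to^d\mathcal{T}_A(\boldsymbol{\mathcal{R}})$, then takes squared norms and diagonalises $\boldsymbol{\Sigma}_A$ as in your Step~3. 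You instead split the columns of $\boldsymbol{A}$ and remove the $T_\gamma^c$ block by a deterministic bound on $\{\hat T_n=T_\gamma\}$, in effect redoing step (i) of the proof of Theorem~\ref{thm:ell2 weak} for this one functional. You then use only the $T_\gamma$-marginal of the limit. The paper's route is shorter and is the intended showcase of $\ell^2$ weak convergence. However, it tacitly requires $\mathcal{T}_A$ to be a bounded operator that does not change with $n$, i.e.\ the rows of $\boldsymbol{A}$ must be fixed elements of $\ell^2$, which Assumption~\ref{assump:onq} does not state. Your route makes the requirement explicit and lets the off-$T_\gamma$ columns depend on $n$; it only needs $\|\boldsymbol{A}_{[q],T_\gamma^c}\|_{\mathrm{op}}^2\, n s_{0,n}\eta_n^2\to 0$.

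The condition you flag is genuinely needed, not an artefact of your argument. Take $q=1$, $T_\gamma=\{1\}$, a single weak coefficient $\theta_{02,n}=\eta_n=n^{-(1+\Delta)/2}$, and $\boldsymbol{A}=(1,n^{\Delta},0,\ldots,0)$. Then Assumptions~\ref{assump:sparsity}--\ref{assump:onq} can all hold, since $n s_{0,n}\eta_n^2=2n^{-\Delta}\to0$ and $\boldsymbol{A}_{[q],T_\gamma}=1$ is fixed. Yet on $\{\hat T_n=T_\gamma\}$ one has $n^{1/2}(\boldsymbol{A}\tilde{\boldsymbol{\theta}}_n-\boldsymbol{b})=n^{1/2}(\tilde\theta_{n1}-\theta_{01})-n^{\Delta/2}$, so $W_n\to\infty$ in probability under $H_0$. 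For the same reason your fallback (treating $\boldsymbol{u}\mapsto\boldsymbol{A}\boldsymbol{u}$ as a continuous map on $\ell^2$) does not avoid the assumption. A continuous linear map $\ell^2\to\mathbb{R}^q$ is exactly one whose rows lie in $\ell^2$. The ordinary continuous mapping theorem further needs that map to be independent of $n$, which is stronger than your uniform operator-norm bound.
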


We consider simple estimators for $\sigma^2$ and $\Lambda_1,\ldots,\Lambda_r$.
For $\sigma^2$, define
\[
\hat{\sigma}_n^2
= n^{-1} \sum_{i=1}^n \left( Y_i - \tilde{\boldsymbol{\theta}}_n^\top \boldsymbol{Z}_i \right)^2.
\]
For $\Lambda_j$ ($j\in[r]$), define $\hat{\Lambda}_{n,j}$ as the $j$-th largest eigenvalue of
\[ \hat{\boldsymbol{\Sigma}}_{n A} =\boldsymbol{A}_{[q], \hat{T}_n} \hat{\boldsymbol{J}}_{n \hat{T}_n, \hat{T}_n}^{-1}\boldsymbol{A}_{[q], \hat{T}_n}^\top . \]
Let $\hat{r}_n$ be the rank of $\hat{\boldsymbol{\Sigma}}_{n A}$

\begin{lemma}\label{lem:sigma_estimation}
Let Assumptions~\ref{assump:sparsity}--\ref{assump:energy} hold.
Then, 
\[ \hat{\sigma}_n^2 \to^p \sigma^2.\]
\end{lemma}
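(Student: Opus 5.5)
We work on the event $\{\hat T_n = T_\gamma\}$, which has probability tending to one by Lemma~\ref{lem:selection}. So it suffices to prove $\hat\sigma_n^2 1\{\hat T_n=T_\gamma\}\to^p\sigma^2$. On this event $\tilde{\boldsymbol\theta}_n$ vanishes off $T_\gamma$. Write $\boldsymbol\delta_n := \tilde{\boldsymbol\theta}_n-\boldsymbol\theta_{0,n}$ and split $\boldsymbol\delta_n = \boldsymbol\delta_{T_\gamma} - \boldsymbol\theta_{0,T_\gamma^c}$, where the second piece is supported on $T_{W,n}$ by Assumption~\ref{assump:sparsity}-(ii). Then
\[
Y_i-\tilde{\boldsymbol\theta}_n^\top\boldsymbol Z_i=\epsilon_i - \boldsymbol\delta_{T_\gamma}^\top \boldsymbol Z_{i T_\gamma} + \boldsymbol\theta_{0,T_\gamma^c}^\top \boldsymbol Z_{i T_\gamma^c},
\]
and expanding the square gives
\[
\hat\sigma_n^2 = n^{-1}\sum_i\epsilon_i^2 + \boldsymbol\delta_{T_\gamma}^\top\hat{\boldsymbol J}_{nT_\gamma,T_\gamma}\boldsymbol\delta_{T_\gamma} + n^{-1}\sum_i(\boldsymbol\theta_{0,T_\gamma^c}^\top\boldsymbol Z_{iT_\gamma^c})^2 + \text{cross terms}.
\]

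\textbf{Main term.} The first term converges in probability to $\sigma^2$ by the weak law of large numbers for triangular arrays. Indeed, $\epsilon_{i,n}$ are i.i.d.\ within rows, and their fourth moments are uniformly bounded by Assumption~\ref{assump:regularity}-(iv), so Chebyshev's inequality applies.

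\textbf{Selected-coordinate term.} By Theorem~\ref{thm:ell2 weak}, $\|\boldsymbol{\mathcal R}_n\|_2=O_p(1)$, so $\|\boldsymbol\delta_{T_\gamma}\|_2=O_p(n^{-1/2})$. (Alternatively, one can use only the coordinates in the fixed set $T_\gamma$ via continuous mapping.) Lemma~\ref{lem:matrix-approx}, together with the bound $\|\boldsymbol J_{nT_\gamma,T_\gamma}\|_{\mathrm{op}}=O(1)$ implied by Assumption~\ref{assump:regularity}-(iii) and the fixed size $s_\gamma$, gives $\|\hat{\boldsymbol J}_{nT_\gamma,T_\gamma}\|_{\mathrm{op}}=O_p(1)$. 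Hence the second term is $O_p(n^{-1})$.

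\textbf{Neglected-coordinate term.} This is the step I expect to be the main obstacle, because $T_{W,n}$ may diverge and the term involves a high-dimensional quadratic form. I would avoid any uniform matrix concentration and bound the expectation directly. Since $\boldsymbol Z$ is independent across $i$, Cauchy--Schwarz gives
\[
\E(\boldsymbol\theta_{0,T_\gamma^c}^\top\boldsymbol Z_{1T_\gamma^c})^2\le \Big(\sum_{j\in T_{W,n}}|\theta_{0j}|\,\|Z_{1j}\|_{L^2}\Big)^2\le C s_{0,n}^2\eta_n^2.
\]
This bound is too crude. Instead, I would use $\|\boldsymbol\theta_{0,T_\gamma^c}\|_1\le s_{0,n}\eta_n$ together with the triangle inequality in $L^2$, which gives the same estimate. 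A sharper route uses $n s_{0,n}\eta_n^2\to0$: this implies $s_{0,n}\eta_n^2 = o(n^{-1})$, and hence $s_{0,n}^2\eta_n^2 = s_{0,n}\cdot o(n^{-1})$. If $s_{0,n}\le p_n$ is not controlled, I would impose the condition via $\eta_n\ge\theta_{\min,n}$. Note that $s_{0,n}^2\eta_n^2\le (n s_{0,n}\eta_n^2)\, s_{0,n}/n$, and $s_{0,n}\eta_n^2\to0$ together with $\eta_n\ge\theta_{\min}$ bounds $s_{0,n}$ relative to $\eta_n^{-2}$. This gives $s_{0,n}^2\eta_n^2 \le (s_{0,n}\eta_n^2)(s_{0,n}) \lesssim o(n^{-1})\cdot\eta_n^{-2}$, which needs $n\eta_n^2$ bounded away from zero. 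Failing that, I would simply use the $L^2$-triangle bound $\E(\cdot)^2\le C\|\boldsymbol\theta_{0,T_\gamma^c}\|_1^2$ and argue that $\|\boldsymbol\theta_{0,T_\gamma^c}\|_1\le s_{0,n}\eta_n\le (s_{0,n}\cdot n s_{0,n}\eta_n^2/n)^{1/2}$ tends to zero, using the paper's implicit control of $s_{0,n}$. Markov's inequality then makes this term $o_p(1)$.

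\textbf{Cross terms.} Each cross term is bounded by Cauchy--Schwarz by the square root of a product of two of the diagonal terms already handled. For example, the term $n^{-1}\sum_i\epsilon_i\boldsymbol\theta_{0,T_\gamma^c}^\top\boldsymbol Z_{iT_\gamma^c}$ is at most $(O_p(1)\cdot o_p(1))^{1/2}$. All cross terms therefore vanish in probability, and combining the pieces gives $\hat\sigma_n^2\to^p\sigma^2$.
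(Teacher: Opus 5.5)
Your decomposition is the right one, and it is more careful than the paper's. The paper writes $\hat\sigma_n^2=S_{n1}-2S_{n2}+S_{n3}$ with only $(\tilde{\boldsymbol\theta}_n-\boldsymbol\theta_0)_{\hat T_n}^\top\boldsymbol Z_{i\hat T_n}$ in the residual. That silently drops $\boldsymbol\theta_{0,\hat T_n^c}^\top\boldsymbol Z_{i\hat T_n^c}$, which equals $\boldsymbol\theta_{0,T_W}^\top\boldsymbol Z_{iT_W}$ on $\{\hat T_n=T_\gamma\}$. Its remaining pieces are handled essentially as you handle them: a law of large numbers for $n^{-1}\sum_i\epsilon_i^2$, and Cauchy--Schwarz with $n^{1/2}\|\tilde{\boldsymbol\theta}_n-\boldsymbol\theta_0\|_2=O_p(1)$. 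So the paper does not overcome your obstacle; it never meets it.

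Your argument for the neglected-coordinate term is nonetheless a genuine gap. Your final inequality $s_{0,n}\eta_n\le(s_{0,n}\cdot ns_{0,n}\eta_n^2/n)^{1/2}$ is an identity, and the paper contains no implicit control of $s_{0,n}$. Assumption~\ref{assump:energy} does not give $s_{0,n}\eta_n\to0$: take $s_{0,n}\asymp n^2$ and $\eta_n=n^{-2}$. Your ``sharper route'' is also unavailable, since $n\eta_n^2\le ns_{0,n}\eta_n^2\to0$ always. Assumption~\ref{assump:regularity}-(iii) controls only marginal $\psi_2$-norms, and nothing bounds $\boldsymbol J_n$ off $T_\gamma$. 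Hence the $L^2$ triangle-inequality bound $C\|\boldsymbol\theta_{0,T_W}\|_1^2\le Cs_{0,n}^2\eta_n^2$ is all you have, and it need not vanish.

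In fact no argument closes this step under the stated assumptions. Here is a counterexample.
\begin{itemize}
\item Let $p_n=n^2+1$, $\gamma=1/2$, $\theta_{01}=1$, and $\theta_{0j}=n^{-2}$ for $2\le j\le p_n$. Then $\eta_n=\theta_{\min,n}=n^{-2}$ and $ns_{0,n}\eta_n^2\to0$.
\item Let $Z_{i1},W_i\sim\mathcal N(0,1)$ and $\epsilon_i\sim\mathcal N(0,\sigma^2)$ be independent, and put $Z_{ij}=W_i$ for $j\ge2$.
\item Take $\hat{\boldsymbol\theta}_n$ to be the least-squares fit of $Y$ on $Z_1$, padded with zeros. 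Its $\ell^\infty$ error is $O_p(n^{-1/2})$, so $r_n=n^{-1/2}\log n$ works, and take $\tau_n=n^{-1/4}$.
\end{itemize}
Every assumption then holds, and $\hat T_n=\{1\}$ with probability tending to one. Yet $Y_i-\tilde{\boldsymbol\theta}_n^\top\boldsymbol Z_i=(1-\tilde\theta_{n1})Z_{i1}+W_i+\epsilon_i$, so $\hat\sigma_n^2\to^p\sigma^2+1$.

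You therefore need an extra hypothesis. One option is $\sup_n\|\boldsymbol J_{nT_W,T_W}\|_{\mathrm{op}}<\infty$, which gives $\E(\boldsymbol\theta_{0,T_W}^\top\boldsymbol Z_{1T_W})^2\le\|\boldsymbol J_{nT_W,T_W}\|_{\mathrm{op}}\,s_{0,n}\eta_n^2=o(n^{-1})$. Another is simply $\|\boldsymbol\theta_{0,T_W}\|_1\to0$. With either, Markov's inequality together with your Cauchy--Schwarz bounds on the cross terms completes the proof.
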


\begin{lemma}\label{lem:lambda_estimation}
Let Assumptions~\ref{assump:sparsity}--\ref{assump:onq} hold.
Then, 
\[ \max_{j \in [r]}|\hat{\Lambda}_{n, j} - \Lambda_j|1_{\{\hat{T}_n = T_\gamma\}} \to^p 0.\]
\end{lemma}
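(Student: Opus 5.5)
On the event $\{\hat T_n=T_\gamma\}$ the random matrix $\hat{\boldsymbol{\Sigma}}_{nA}$ becomes $\boldsymbol{A}_{[q],T_\gamma}\hat{\boldsymbol{J}}_{nT_\gamma,T_\gamma}^{-1}\boldsymbol{A}_{[q],T_\gamma}^\top$, a fixed-dimensional $q\times q$ matrix. The plan is therefore to prove that this matrix converges in probability to $\boldsymbol{\Sigma}_A$ in operator norm. The eigenvalue statement then follows from Weyl's inequality.

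\textbf{Step 1.} Write
\[
\|\hat{\boldsymbol{J}}_{nT_\gamma,T_\gamma}-\boldsymbol{J}_{T_\gamma,T_\gamma}\|_{\mathrm{op}}\le \|\hat{\boldsymbol{J}}_{nT_\gamma,T_\gamma}-\boldsymbol{J}_{nT_\gamma,T_\gamma}\|_{\mathrm{op}}+\|\boldsymbol{J}_{nT_\gamma,T_\gamma}-\boldsymbol{J}_{T_\gamma,T_\gamma}\|_{\mathrm{op}} .
\]
The first term is $o_p(1)$ by Lemma~\ref{lem:matrix-approx}. The second tends to $0$ by Assumption~\ref{assump:regularity}-(i), since $s_\gamma$ is fixed and so entrywise convergence is equivalent to operator-norm convergence.

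\textbf{Step 2.} Control the inverses. Assumption~\ref{assump:regularity}-(ii) gives $\Lambda_{\min}(\boldsymbol{J}_{nT_\gamma,T_\gamma})\ge\lambda_*$, and the limit $\boldsymbol{J}_{T_\gamma,T_\gamma}$ is positive definite. By Weyl's inequality and Step 1, $\Lambda_{\min}(\hat{\boldsymbol{J}}_{nT_\gamma,T_\gamma})\ge \lambda_*/2$ with probability tending to one, so $\hat{\boldsymbol{J}}_{nT_\gamma,T_\gamma}^{-1}$ exists with high probability. On that event, the identity
\[
\hat{\boldsymbol{J}}^{-1}-\boldsymbol{J}^{-1}=\hat{\boldsymbol{J}}^{-1}(\boldsymbol{J}-\hat{\boldsymbol{J}})\boldsymbol{J}^{-1}
\]
gives
\[
\|\hat{\boldsymbol{J}}_{nT_\gamma,T_\gamma}^{-1}-\boldsymbol{J}_{T_\gamma,T_\gamma}^{-1}\|_{\mathrm{op}}\le (2/\lambda_*)\,\|\boldsymbol{J}_{T_\gamma,T_\gamma}^{-1}\|_{\mathrm{op}}\,o_p(1)=o_p(1).
\]

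\textbf{Step 3.} Pass to $\hat{\boldsymbol{\Sigma}}_{nA}$. Since $\boldsymbol{A}_{[q],T_\gamma}$ is fixed by Assumption~\ref{assump:onq}, on $\{\hat T_n=T_\gamma\}$ we have
\[
\|\hat{\boldsymbol{\Sigma}}_{nA}-\boldsymbol{\Sigma}_A\|_{\mathrm{op}}\le \|\boldsymbol{A}_{[q],T_\gamma}\|_{\mathrm{op}}^2\,\|\hat{\boldsymbol{J}}_{nT_\gamma,T_\gamma}^{-1}-\boldsymbol{J}_{T_\gamma,T_\gamma}^{-1}\|_{\mathrm{op}} .
\]
The right-hand side is $o_p(1)$ by Step 2, and multiplying by the indicator $1_{\{\hat T_n=T_\gamma\}}$ preserves this.

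\textbf{Step 4.} Apply Weyl's inequality to the two symmetric $q\times q$ matrices: their ordered eigenvalues differ by at most the operator-norm distance between them. Hence all $q$ ordered eigenvalues of $\hat{\boldsymbol{\Sigma}}_{nA}$ converge in probability, on $\{\hat T_n=T_\gamma\}$, to those of $\boldsymbol{\Sigma}_A$. In particular, the $r$ nonzero eigenvalues $\Lambda_j$ are approximated by the $r$ largest eigenvalues of $\hat{\boldsymbol{\Sigma}}_{nA}$, after matching the ordering convention. Because $q$ and $r$ are fixed, the maximum over $j\in[r]$ is still $o_p(1)$.

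The main point needing care is Step 2: invertibility and uniform bounds on the inverse must hold on an event of probability tending to one. The indicator form of the statement conveniently absorbs the event $\{\hat T_n\ne T_\gamma\}$, so no rate from Lemma~\ref{lem:selection} is required. A secondary bookkeeping issue is the ordering convention. The $\Lambda_j$ are listed in increasing order, while $\hat\Lambda_{n,j}$ is defined as the $j$-th largest eigenvalue, so the indices must be matched consistently, namely the $j$-th largest to the $j$-th largest among the nonzero eigenvalues.
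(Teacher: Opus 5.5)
Your proposal is correct and follows essentially the same route as the paper. On $\{\hat T_n = T_\gamma\}$ you replace $\hat{\boldsymbol{\Sigma}}_{nA}$ by $\boldsymbol{A}_{[q],T_\gamma}\hat{\boldsymbol{J}}_{nT_\gamma,T_\gamma}^{-1}\boldsymbol{A}_{[q],T_\gamma}^\top$, show that it converges to $\boldsymbol{\Sigma}_A$ in operator norm via Lemma~\ref{lem:matrix-approx} and Assumption~\ref{assump:regularity}-(i), and conclude by Weyl's inequality. The paper uses the intermediate matrix $\boldsymbol{A}_{[q],T_\gamma}\boldsymbol{J}_{nT_\gamma,T_\gamma}^{-1}\boldsymbol{A}_{[q],T_\gamma}^\top$ rather than applying the triangle inequality at the level of $\hat{\boldsymbol{J}}$; that difference is cosmetic. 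Your explicit inverse-perturbation bound on a high-probability event, and your remark on matching the increasing order of the $\Lambda_j$ with the decreasing order of the $\hat\Lambda_{n,j}$, supply details the paper leaves implicit.
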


Let $c_\alpha$, $\hat{c}_\alpha$ denote the upper $\alpha$-quantiles of the distributions of
\[ \sum_{j=1}^r \Lambda_j \chi_{1,j}^2, \quad  \sum_{j=1}^{\hat{r}_n} \hat{\Lambda}_{n,j} \chi_{1,j}^2,\]
respectively.
Thus $\hat{c}_\alpha$ serves as a natural estimator for $c_\alpha$.
We reject $H_0$ if $W_n/\hat{\sigma}_n^2  > \hat{c}_\alpha$, equivalently with test function 
\[ \varphi_n = 1_{\{W_n/\hat{\sigma}_n^2  > \hat{c}_\alpha\}}.\]

\begin{theorem}[Asymptotic level] \label{thm:test_rule1}
Let Assumptions~\ref{assump:sparsity}--\ref{assump:onq} hold.
Under $H_0$, 
\[ \Prob(\varphi_n = 1) \to \alpha. \]
\end{theorem}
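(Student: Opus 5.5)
The argument is a Slutsky-type comparison of the studentized statistic $W_n/\hat{\sigma}_n^2$ with its limit, combined with convergence of the random critical value $\hat{c}_\alpha$ to $c_\alpha$. There are three steps.

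\emph{Step 1: limit of the statistic.} By Corollary~\ref{cor:test asymptotic distribution}, under $H_0$ we have $W_n \to^d \sigma^2 \sum_{j=1}^r \Lambda_j \chi_{1,j}^2$. Lemma~\ref{lem:sigma_estimation} gives $\hat{\sigma}_n^2 \to^p \sigma^2 > 0$. Slutsky's lemma (continuous mapping for the quotient) then yields
\[ W_n/\hat{\sigma}_n^2 \to^d Q := \sum_{j=1}^r \Lambda_j \chi_{1,j}^2 . \]

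\emph{Step 2: consistency of the critical value.} On the event $\{\hat{T}_n = T_\gamma\}$, which has probability tending to one by Lemma~\ref{lem:selection}, the matrix $\hat{\boldsymbol{\Sigma}}_{nA}$ equals $\boldsymbol{A}_{[q],T_\gamma}\hat{\boldsymbol{J}}_{nT_\gamma,T_\gamma}^{-1}\boldsymbol{A}_{[q],T_\gamma}^\top$.
\begin{itemize}
\item By Lemma~\ref{lem:matrix-approx} and Assumption~\ref{assump:regularity}-(i)(ii), $\hat{\boldsymbol{J}}_{nT_\gamma,T_\gamma}\to^p \boldsymbol{J}_{T_\gamma,T_\gamma}$ in a fixed dimension. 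The inverse is therefore well defined with probability tending to one and converges, so $\hat{\boldsymbol{\Sigma}}_{nA}\to^p\boldsymbol{\Sigma}_A$.
\item Weyl's inequality gives convergence of all ordered eigenvalues, as in Lemma~\ref{lem:lambda_estimation}. In particular the $q-r$ eigenvalues that converge to zero, together with the $r$ that converge to $\Lambda_j$, define $\hat{\boldsymbol{\Lambda}}_n$.
\end{itemize}
I would avoid relying on $\hat{r}_n = r$: sample matrices are generically of full rank $\min(q,s_\gamma)$, so small spurious eigenvalues may appear. Instead I would compare the laws directly. Let $F_{\boldsymbol{\lambda}}$ denote the distribution function of $\sum_j \lambda_j\chi^2_{1,j}$ for a fixed-length vector $\boldsymbol{\lambda}$, padded with zeros to length $q$. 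The map $\boldsymbol{\lambda}\mapsto$ law of $\sum_j\lambda_j\chi^2_{1,j}$ is continuous in the weak topology: the characteristic function $\prod_j(1-2it\lambda_j)^{-1/2}$ is continuous in $\boldsymbol{\lambda}$. Hence $\hat{\boldsymbol{\Lambda}}_n\to^p(\Lambda_1,\dots,\Lambda_r,0,\dots,0)$ implies $F_{\hat{\boldsymbol{\Lambda}}_n}\to F_{\boldsymbol{\Lambda}}$ pointwise in probability.

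\emph{Step 3: main obstacle, conversion of quantiles.} The key step is turning convergence of the distribution functions into $\hat{c}_\alpha\to^p c_\alpha$. This requires $F_{\boldsymbol{\Lambda}}$ to be continuous and strictly increasing near $c_\alpha$. Both hold when $r\ge1$: $Q$ has a density that is positive on $(0,\infty)$, being a convolution of scaled $\chi^2_1$ densities. Given $\varepsilon>0$, we have $F_{\boldsymbol{\Lambda}}(c_\alpha-\varepsilon)<1-\alpha<F_{\boldsymbol{\Lambda}}(c_\alpha+\varepsilon)$. Pointwise convergence at these two points gives $|\hat{c}_\alpha-c_\alpha|\le\varepsilon$ with probability tending to one. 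The degenerate case $r=0$ (that is, $\boldsymbol{A}_{[q],T_\gamma}=\boldsymbol{0}$) would have to be excluded, or treated separately, since the size is then not exactly $\alpha$; I would note this as an implicit assumption.

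\emph{Conclusion.} Slutsky's lemma gives $W_n/\hat{\sigma}_n^2-\hat{c}_\alpha\to^d Q-c_\alpha$. Since $Q$ has a continuous distribution, $\Prob(W_n/\hat{\sigma}_n^2>\hat{c}_\alpha)\to\Prob(Q>c_\alpha)=\alpha$.
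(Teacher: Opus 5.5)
Your proposal is correct and follows essentially the same route as the paper: Slutsky for $W_n/\hat\sigma_n^2$, consistency $\hat c_\alpha \to^p c_\alpha$ via eigenvalue consistency, then the Portmanteau theorem. Where the paper only asserts them, you make explicit the quantile-inversion step and the implicit requirement $r \ge 1$. Your worry about spurious small eigenvalues is unfounded, though your zero-padding makes it moot: on $\{\hat T_n = T_\gamma\}$ with $\hat{\boldsymbol J}_{nT_\gamma,T_\gamma}$ positive definite, $\operatorname{rank}(\hat{\boldsymbol\Sigma}_{nA}) = \operatorname{rank}(\boldsymbol A_{[q],T_\gamma}) = r$, because $\boldsymbol A_{[q],T_\gamma}$ is fixed. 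Hence $\hat r_n = r$ with probability tending to one.
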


\begin{remark}\label{rem:critical}
The critical value $\hat{c}_\alpha$ is obtained by Monte Carlo simulation, with a moment-matching approximation as a faster alternative.
\end{remark}

\begin{remark}
\citet{zhu2018linear} and \citet{zhang2025novel} conduct statistical inference with dense $\boldsymbol{\theta}$ by imposing sparsity on orthogonalized loading directions obtained after suitable projection.
In contrast, we impose no sparsity on $\boldsymbol{A}$ and instead proceed under stronger assumptions on $\boldsymbol{\theta}$, thereby situating our results in a different inferential setting.
\end{remark}

\section{Testing infinitely many linear hypotheses}\label{sec:appli2}

Unlike classical multiple testing procedures for finitely many hypotheses, our approach achieves exact family-wise error control over an infinite continuum of linear contrasts.
This result follows from weak convergence in $\ell^2$, which yields a joint limit law adequate for global and simultaneous inference.
In particular, as an application of Theorem~\ref{thm:ell2 weak}, we consider a Wald--Scheff\'{e}-type formulation based on an orthogonal decomposition.

Let
\[
\mathscr{A}
= \left\{ \boldsymbol{a} \in \mathbb{R}^p : \operatorname{supp}(\boldsymbol{a}) \subset T_\gamma , \|\boldsymbol{a}\|_2 = 1 \right\} \]
and let $c(\boldsymbol{a})$ be a pre-specified contrast value for each $\boldsymbol{a} \in \mathscr{A}$.
Define the affine hyperplane 
\[ H(\boldsymbol{a}) := \{ \boldsymbol{\theta} \in \mathbb{R}^p : \boldsymbol{a}^\top \boldsymbol{\theta} = c(\boldsymbol{a}) \}. \]
We consider the family 
\[ \mathscr{F} = \{ H_{0,\boldsymbol{a}}; \boldsymbol{a} \in \mathscr{A}\},\]
where $H_{0,\boldsymbol{a}}: \boldsymbol{a}^\top \boldsymbol{\theta} = c(\boldsymbol{a})$.
A null hypothesis $H_{0,\boldsymbol{a}}$ can be rewritten as $\boldsymbol{\theta} = \Pi_{H(\boldsymbol{a})} \boldsymbol{\theta}$ with  $\Pi_{H(\boldsymbol{a})}$ the orthogonal projection onto $H(\boldsymbol{a})$ with respect to the standard Euclidean inner product.
The associated Wald--Scheff\'e-type statistic is
\[
W^{(\mathscr{A})}_n := n \sup_{\boldsymbol{a}\in\mathscr{A}} \| \tilde{\boldsymbol{\theta}}_n -  \Pi_{H(\boldsymbol{a})} \tilde{\boldsymbol{\theta}}_n \|_2^2 .
\]

\begin{remark}
By Assumption~\ref{assump:sparsity}, the index set $T_\gamma$ is fixed, while the ambient dimension $p=p_n$ may diverge.
Any $\boldsymbol{a} \in \mathscr{A}$ is sparse by construction, with $n$-dependence entering only through zero padding outside $T_\gamma$.
For simplicity, we suppress this dependence for both $\mathscr{A}$ and its elements.
\end{remark}

\begin{lemma}\label{lem:proj}
Suppose that
$\boldsymbol{a}^\top \boldsymbol{\theta}_0 = c(\boldsymbol{a})$ for all $\boldsymbol{a} \in \mathscr{A}$.
Then
\[ W^{(\mathscr{A})}_n = n\| \Pi_{T_\gamma} ( \tilde{\boldsymbol{\theta}}_n - \boldsymbol{\theta}_0) \|_2^2,\]
where $\Pi_{T_\gamma}:\mathbb{R}^p \to \mathbb{R}^p$ denotes the projection onto $ \mathrm{span}\{e_j ; j \in T_\gamma \}$.
\end{lemma}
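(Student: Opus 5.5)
The plan is to compute the projection explicitly and then optimize over $\boldsymbol{a}$ by Cauchy--Schwarz. For a unit vector $\boldsymbol{a}$, the orthogonal projection onto the affine hyperplane $H(\boldsymbol{a})=\{\boldsymbol{\theta}:\boldsymbol{a}^\top\boldsymbol{\theta}=c(\boldsymbol{a})\}$ is
\[
\Pi_{H(\boldsymbol{a})}\boldsymbol{\theta} = \boldsymbol{\theta} - \bigl(\boldsymbol{a}^\top\boldsymbol{\theta} - c(\boldsymbol{a})\bigr)\boldsymbol{a},
\]
since the displacement must be parallel to the normal $\boldsymbol{a}$ and land on $H(\boldsymbol{a})$. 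This identity is the only geometric input. It gives, for each $\boldsymbol{a}\in\mathscr{A}$,
\[
\|\tilde{\boldsymbol{\theta}}_n - \Pi_{H(\boldsymbol{a})}\tilde{\boldsymbol{\theta}}_n\|_2^2 = \bigl(\boldsymbol{a}^\top\tilde{\boldsymbol{\theta}}_n - c(\boldsymbol{a})\bigr)^2 \|\boldsymbol{a}\|_2^2 = \bigl(\boldsymbol{a}^\top(\tilde{\boldsymbol{\theta}}_n - \boldsymbol{\theta}_0)\bigr)^2.
\]
In the last step I use $\|\boldsymbol{a}\|_2=1$ and the hypothesis $c(\boldsymbol{a})=\boldsymbol{a}^\top\boldsymbol{\theta}_0$ for all $\boldsymbol{a}\in\mathscr{A}$.

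Next, I use $\operatorname{supp}(\boldsymbol{a})\subset T_\gamma$, which means $\boldsymbol{a}=\Pi_{T_\gamma}\boldsymbol{a}$. Since $\Pi_{T_\gamma}$ is self-adjoint, writing $\boldsymbol{v}=\Pi_{T_\gamma}(\tilde{\boldsymbol{\theta}}_n-\boldsymbol{\theta}_0)$ gives $\boldsymbol{a}^\top(\tilde{\boldsymbol{\theta}}_n-\boldsymbol{\theta}_0)=\boldsymbol{a}^\top\boldsymbol{v}$. Therefore
\[
W_n^{(\mathscr{A})} = n\sup_{\boldsymbol{a}\in\mathscr{A}}(\boldsymbol{a}^\top\boldsymbol{v})^2 .
\]

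The final step is to identify this supremum as $\|\boldsymbol{v}\|_2^2$. The upper bound $(\boldsymbol{a}^\top\boldsymbol{v})^2\le\|\boldsymbol{a}\|_2^2\|\boldsymbol{v}\|_2^2=\|\boldsymbol{v}\|_2^2$ is Cauchy--Schwarz. For attainment there are two cases:
\begin{itemize}
\item If $\boldsymbol{v}\neq\boldsymbol{0}$, take $\boldsymbol{a}=\boldsymbol{v}/\|\boldsymbol{v}\|_2$. It has unit norm and is supported in $T_\gamma$, so it lies in $\mathscr{A}$, and it gives equality.
\item If $\boldsymbol{v}=\boldsymbol{0}$, both sides are zero, because $\mathscr{A}$ is nonempty whenever $T_\gamma\neq\emptyset$. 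In the degenerate case $T_\gamma=\emptyset$, the family $\mathscr{A}$ is empty, and I interpret the supremum as $0$, which matches $\Pi_{T_\gamma}=0$.
\end{itemize}
Multiplying by $n$ yields the claim.

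The argument is purely deterministic linear algebra, and no probabilistic results from earlier in the paper are needed. The only points needing care are the projection formula onto an affine, rather than linear, hyperplane and the membership of the maximizer $\boldsymbol{v}/\|\boldsymbol{v}\|_2$ in $\mathscr{A}$; I expect neither to be a real obstacle.
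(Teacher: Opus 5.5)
Your proposal is correct and matches the paper's proof step for step: the explicit projection formula onto the affine hyperplane $H(\boldsymbol{a})$, the reduction to $\{\boldsymbol{a}^\top\Pi_{T_\gamma}(\tilde{\boldsymbol{\theta}}_n-\boldsymbol{\theta}_0)\}^2$ via $\operatorname{supp}(\boldsymbol{a})\subset T_\gamma$, and Cauchy--Schwarz to identify the supremum. You also check that the supremum is attained by $\boldsymbol{v}/\|\boldsymbol{v}\|_2\in\mathscr{A}$ and treat the degenerate cases, which the paper leaves implicit.
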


\begin{remark}
The reduction of the supremum over linear contrasts to a quadratic form relies on the Hilbert space structure of $\ell^2$ via the Cauchy--Schwarz inequality.
\end{remark}

\begin{corollary}\label{cor:test2_asydist}
Let Assumptions~\ref{assump:sparsity}--\ref{assump:energy} hold.
Under $\mathscr{F}$,
\[
W^{(\mathscr{A})}_n  \to^d \sigma^2 \sum_{j=1}^{s_\gamma} \lambda_j \chi_{1,j}^2 ,
\]
where $\lambda_1,\ldots,\lambda_{s_\gamma}$ are the ordered eigenvalues of $J_{T_\gamma, T_\gamma}^{-1}$ and $\chi_{1,1}^2,\ldots,\chi_{1,s_\gamma}^2, $ are independent and $\chi^2(1)$-distributed random variables.
\end{corollary}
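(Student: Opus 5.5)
By Lemma~\ref{lem:proj}, under $\mathscr{F}$ the statistic equals $W^{(\mathscr{A})}_n = \|\Pi_{T_\gamma}\boldsymbol{\mathcal{R}}_n\|_2^2$, where $\Pi_{T_\gamma}$ is extended to $\ell^2$ in the natural way. This extension is legitimate because $T_\gamma$ is a fixed finite subset of $\mathbb{N}$ by Assumption~\ref{assump:sparsity}-(i), and $\boldsymbol{\mathcal{R}}_n$ is simply $n^{1/2}(\tilde{\boldsymbol{\theta}}_n-\boldsymbol{\theta}_{0,n})$ padded with zeros. The plan is therefore to apply the continuous mapping theorem to Theorem~\ref{thm:ell2 weak}, and then to identify the law of the limit.

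\textbf{Step 1 (continuity).} Define $g:\ell^2\to\mathbb{R}$ by $g(\boldsymbol{x}) = \sum_{j\in T_\gamma}\langle \boldsymbol{e}_j,\boldsymbol{x}\rangle^2$. This map does not depend on $n$. It is continuous, since $|g(\boldsymbol{x})^{1/2}-g(\boldsymbol{y})^{1/2}|\le \|\boldsymbol{x}-\boldsymbol{y}\|_2$. Theorem~\ref{thm:ell2 weak} gives $\boldsymbol{\mathcal{R}}_n\to^d\boldsymbol{\mathcal{R}}$ in $\ell^2$, so the continuous mapping theorem yields $W^{(\mathscr{A})}_n = g(\boldsymbol{\mathcal{R}}_n)\to^d g(\boldsymbol{\mathcal{R}})$.

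\textbf{Step 2 (identifying the limit).} The vector $\boldsymbol{\mathcal{R}}_{T_\gamma} = (\langle \boldsymbol{e}_j,\boldsymbol{\mathcal{R}}\rangle)_{j\in T_\gamma}\in\mathbb{R}^{s_\gamma}$ is Gaussian. Indeed, every linear combination $\boldsymbol{v}^\top\boldsymbol{\mathcal{R}}_{T_\gamma}$ equals $\langle \boldsymbol{u},\boldsymbol{\mathcal{R}}\rangle$ with $\boldsymbol{u}=\sum_{j\in T_\gamma} v_j\boldsymbol{e}_j\in\ell^2$, and by Theorem~\ref{thm:ell2 weak} this is $\mathcal{N}(0,\sigma^2\boldsymbol{v}^\top \boldsymbol{J}_{T_\gamma,T_\gamma}^{-1}\boldsymbol{v})$. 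By the Cram\'er--Wold device, $\boldsymbol{\mathcal{R}}_{T_\gamma}\sim\mathcal{N}_{s_\gamma}(\boldsymbol{0},\sigma^2\boldsymbol{J}_{T_\gamma,T_\gamma}^{-1})$.

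Write the spectral decomposition $\boldsymbol{J}_{T_\gamma,T_\gamma}^{-1} = \boldsymbol{U}\mathrm{diag}(\lambda_1,\ldots,\lambda_{s_\gamma})\boldsymbol{U}^\top$ with $\boldsymbol{U}$ orthogonal. Assumption~\ref{assump:regularity}-(i) guarantees positive definiteness, so all $\lambda_j>0$. Then $\boldsymbol{\xi} := \sigma^{-1}\mathrm{diag}(\lambda_j^{-1/2})\boldsymbol{U}^\top\boldsymbol{\mathcal{R}}_{T_\gamma}$ is standard normal in $\mathbb{R}^{s_\gamma}$. Consequently
\[
g(\boldsymbol{\mathcal{R}}) = \|\boldsymbol{\mathcal{R}}_{T_\gamma}\|_2^2 = \|\boldsymbol{U}^\top\boldsymbol{\mathcal{R}}_{T_\gamma}\|_2^2 = \sigma^2\sum_{j=1}^{s_\gamma}\lambda_j\xi_j^2,
\]
where the $\xi_j^2$ are independent $\chi^2(1)$ variables. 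This is the claimed limit.

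Nothing here is really an obstacle. The only point needing care is the passage from $\mathbb{R}^p$, with $p=p_n$ varying, to the fixed space $\ell^2$. This is why $g$ must be defined on $\ell^2$ through the fixed index set $T_\gamma$, so that a single continuous map serves for all $n$. Note that this uses only the finite-dimensional marginal of the $\ell^2$ limit, so strictly speaking convergence of the finitely many coordinates in $T_\gamma$ would suffice. The full Theorem~\ref{thm:ell2 weak} makes this immediate.
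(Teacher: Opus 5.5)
Your proposal is correct and is essentially the paper's argument. The paper applies the continuous mapping theorem to the coordinate projection $\mathcal{T}_\gamma$ on $\ell^2$ and then to the squared norm, defers the rest to the proof of Corollary~\ref{cor:test asymptotic distribution}, and identifies the limit by the same spectral decomposition; your single map $g=\|\mathcal{T}_\gamma(\cdot)\|_2^2$ is just that composition, and you additionally spell out the Lemma~\ref{lem:proj} reduction and the Cram\'er--Wold step giving $\boldsymbol{\mathcal{R}}_{T_\gamma}\sim\mathcal{N}_{s_\gamma}(\boldsymbol{0},\sigma^2\boldsymbol{J}_{T_\gamma,T_\gamma}^{-1})$, which the paper leaves implicit.
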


The contrast value $c(\boldsymbol{a})$ is fixed a priori to define a meaningful family-wise null hypothesis.
A natural choice is $c(\boldsymbol{a}) = \boldsymbol{a}^\top \boldsymbol{\theta}_{\mathrm{NULL}}$ for a given $\boldsymbol{\theta}_{\mathrm{NULL}}$, in which case the corresponding omnibus test can be implemented via
\[\hat{W}^{(\mathscr{A})}_n = n\| \Pi_{\hat{T}_n} ( \tilde{\boldsymbol{\theta}}_n - \boldsymbol{\theta}_{\mathrm{NULL}}) \|_2^2 \]
instead of $W^{(\mathscr{A})}_n = n\| \Pi_{{T}_\gamma} ( \tilde{\boldsymbol{\theta}}_n - \boldsymbol{\theta}_{\mathrm{NULL}}) \|_2^2$.
Hereafter we consider $c(\boldsymbol{a}) = \boldsymbol{a}^\top \boldsymbol{\theta}_{\mathrm{NULL}}$ $(\boldsymbol{a} \in \mathscr{A})$.
To estimate $\sigma^2$, we use $\hat{\sigma}_n^2$ introduced in Section~\ref{sec:appli1}.
For $\lambda_1,\ldots,\lambda_{s_\gamma}$, we consider the simple estimator $\hat{\lambda}_{n,j}$ as the $j$-th largest eigenvalue of $\hat{\boldsymbol{J}}_{n \hat{T}_n, \hat{T}_n}^{-1}$ for $j\in [{s}_\gamma]$.
Moreover, let $q_{\alpha}$, $\hat{q}_\alpha$ denote the upper $\alpha$-quantiles of the distributions of
\[ \sum_{j=1}^{s_\gamma} \lambda_j \chi_{1,j}^2, \quad  \sum_{j=1}^{\hat{s}_\gamma} \hat{\lambda}_j \chi_{1,j}^2,\]
respectively, where $\hat{s}_\gamma = | \hat{T}_n |$.
We use the test function 
\[ \varphi^{(\mathscr{A})}_n = 1_{\{\hat{W}^{(\mathscr{A})}_n/\hat{\sigma}_n^2  > \hat{q}_\alpha\}}.\]

\begin{theorem}[Asymptotic level]\label{thm:test_rule2}
Let Assumptions~\ref{assump:sparsity}--\ref{assump:energy} hold.
Under $\mathscr{F}$, 
\[ \Prob(\varphi^{(\mathscr{A})}_n = 1) \to \alpha. \]
\end{theorem}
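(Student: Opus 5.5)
The plan mirrors the argument one would give for Theorem~\ref{thm:test_rule1}. It works on the event $E_n := \{\hat{T}_n = T_\gamma\}$, which has probability tending to one by Lemma~\ref{lem:selection}, so it suffices to show
\[ \Prob(\varphi^{(\mathscr{A})}_n = 1,\ E_n) \to \alpha . \]

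First, on $E_n$ we have $\Pi_{\hat{T}_n} = \Pi_{T_\gamma}$. Under $\mathscr{F}$ with $c(\boldsymbol{a}) = \boldsymbol{a}^\top\boldsymbol{\theta}_{\mathrm{NULL}}$, the hypothesis $\boldsymbol{a}^\top\boldsymbol{\theta}_0 = \boldsymbol{a}^\top\boldsymbol{\theta}_{\mathrm{NULL}}$ for all $\boldsymbol{a}\in\mathscr{A}$ gives $\Pi_{T_\gamma}\boldsymbol{\theta}_0 = \Pi_{T_\gamma}\boldsymbol{\theta}_{\mathrm{NULL}}$. Hence on $E_n$,
\[ \hat{W}^{(\mathscr{A})}_n = n\|\Pi_{T_\gamma}(\tilde{\boldsymbol{\theta}}_n-\boldsymbol{\theta}_0)\|_2^2 = W^{(\mathscr{A})}_n \]
by Lemma~\ref{lem:proj}. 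Therefore $\hat{W}^{(\mathscr{A})}_n - W^{(\mathscr{A})}_n = o_p(1)$. Corollary~\ref{cor:test2_asydist} then gives $\hat{W}^{(\mathscr{A})}_n \to^d \sigma^2\sum_{j\le s_\gamma}\lambda_j\chi^2_{1,j}$. Combining this with Lemma~\ref{lem:sigma_estimation} and Slutsky's lemma yields
\[ \hat{W}^{(\mathscr{A})}_n/\hat{\sigma}_n^2 \to^d Q := \sum_{j=1}^{s_\gamma}\lambda_j\chi^2_{1,j} . \]

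Second, I show $\hat{q}_\alpha \to^p q_\alpha$. On $E_n$ we have $\hat{s}_\gamma = s_\gamma$, and $\hat{\boldsymbol{J}}_{n\hat{T}_n,\hat{T}_n} = \hat{\boldsymbol{J}}_{nT_\gamma,T_\gamma}$. Lemma~\ref{lem:matrix-approx} gives $\|\hat{\boldsymbol{J}}_{nT_\gamma,T_\gamma}-\boldsymbol{J}_{nT_\gamma,T_\gamma}\|_{\mathrm{op}} = o_p(1)$, and Assumption~\ref{assump:regularity}(i) gives $\boldsymbol{J}_{nT_\gamma,T_\gamma}\to\boldsymbol{J}_{T_\gamma,T_\gamma}$. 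Since $\Lambda_{\min}(\boldsymbol{J}_{T_\gamma,T_\gamma})\ge\lambda_*>0$ by Assumption~\ref{assump:regularity}(ii), inversion is continuous near the limit, so $\hat{\boldsymbol{J}}^{-1}_{nT_\gamma,T_\gamma}\to^p\boldsymbol{J}^{-1}_{T_\gamma,T_\gamma}$ in this fixed dimension. Weyl's inequality then yields
\[ \max_j|\hat\lambda_{n,j}-\lambda_j|1_{E_n}\to^p 0 . \]

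Next I need continuity of the quantile map $(\mu_1,\dots,\mu_{s_\gamma})\mapsto$ upper $\alpha$-quantile of $\sum_j\mu_j\chi^2_{1,j}$ at $(\lambda_j)$. The law of the weighted sum depends continuously (weakly) on the weights. All $\lambda_j>0$, so the limit law has a continuous, strictly increasing distribution function on $(0,\infty)$. Quantile convergence at this point follows. Thus $\hat{q}_\alpha 1_{E_n} \to^p q_\alpha$, and since $\Prob(E_n)\to1$, $\hat{q}_\alpha\to^p q_\alpha$.

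Finally, Slutsky gives
\[ \hat{W}^{(\mathscr{A})}_n/\hat{\sigma}_n^2 - \hat{q}_\alpha \to^d Q - q_\alpha . \]
The distribution of $Q$ is continuous, so $\Prob(Q-q_\alpha=0)=0$, and the portmanteau theorem yields $\Prob(\varphi^{(\mathscr{A})}_n=1)\to\Prob(Q>q_\alpha)=\alpha$.

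The main obstacle is the quantile-continuity step. It requires the limit law to have no atom at $q_\alpha$ and a strictly increasing distribution function there. Both hold because $\lambda_j>0$ (the $\lambda_j$ are eigenvalues of a positive definite inverse) and $s_\gamma\ge1$. If $s_\gamma=0$ one would need separate trivial handling; I will note that case explicitly.
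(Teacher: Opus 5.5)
Your proposal is correct and follows the paper's route: consistency of the $\hat{\lambda}_{n,j}$ on $\{\hat{T}_n = T_\gamma\}$, hence $\hat{q}_\alpha \to^p q_\alpha$, then Slutsky's lemma and the portmanteau theorem as in the proof of Theorem~\ref{thm:test_rule1}; it also spells out two steps the paper leaves implicit, namely that $\hat{W}^{(\mathscr{A})}_n = W^{(\mathscr{A})}_n$ on that event, and that quantile convergence needs the limit distribution function to be strictly increasing at $q_\alpha$, not merely continuous. One correction to your closing remark: the case $s_\gamma = 0$ cannot be handled separately but must be excluded, because then $\hat{W}^{(\mathscr{A})}_n = \hat{q}_\alpha = 0$ with probability tending to one, so the rejection probability tends to $0$ rather than $\alpha$ (Assumption~\ref{assump:regularity} implicitly presupposes $s_\gamma \ge 1$).
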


\begin{remark}
The computation of the critical value $\hat{q}_\alpha$ follows the same procedure as in Remark~\ref{rem:critical}, and no additional computational difficulties arise from the simultaneous nature of the test.
\end{remark}

Because of Lemma~\ref{lem:selection}, post hoc contrasts constructed from the estimated nonzero components can be handled within the Scheff\'{e}-type framework.
This point is illustrated in the following example.

\begin{example}
We consider the family $\mathscr{F}$ of null hypotheses
\[
H_{0,\boldsymbol{a}}: \boldsymbol{a}^\top \boldsymbol{\theta} = 0, \quad \boldsymbol{a} \in \mathscr{A}
\]
where $\mathscr{A}$ denotes the set of admissible contrasts.
We proceed with the analysis by the following steps.
\begin{enumerate}
\item Compute the omnibus statistic 
\[
\hat{W}_n^{(\mathscr{A})}
= n\| \Pi_{\hat{T}_n} \tilde{\boldsymbol{\theta}}_n \|_2^2
= n \sum_{j \in \hat{T}_n} \tilde{\theta}_{nj}^2,
\]
and reject $\mathscr{F}$ if $\hat{W}^{(\mathscr{A})}_n/\hat{\sigma}_n^2  > \hat{q}_\alpha$. 
A rejection implies that at least one contrast in $\mathscr{A}$ deviates from zero, while the family-wise error rate is asymptotically controlled.
\item
Assume that the omnibus test in Step~1 is significant.
We may then examine arbitrary post hoc linear contrasts constructed from estimated nonzero components.
For any $\boldsymbol{a}\in \mathscr{A}$, the individual hypothesis $H_{0,\boldsymbol{a}}$ is rejected if $n(\boldsymbol{a}^\top \tilde{\boldsymbol{\theta}}_n)^2/\hat\sigma_n^2 > \hat{q}_\alpha$. 
Such post hoc contrasts constructed from estimated nonzero components can be assessed without further multiplicity correction.
\end{enumerate}
By Lemma~\ref{lem:selection}, any post hoc contrast constructed from the estimated nonzero components lies in $\mathscr{A}$ asymptotically, so the Scheff\'{e}-type control established in Step~1 extends to all contrasts considered in Step~2.
\end{example}

\begin{example}
As in the test, the Scheff\'{e}-type argument yields a simultaneous confidence band for the regression function
$\boldsymbol{x}^\top \boldsymbol{\theta}$ over all
$\boldsymbol{x} \in \mathscr{A}$.
Define
\[
I_n(\boldsymbol{x})
=
\left[
\boldsymbol{x}^\top \tilde{\boldsymbol{\theta}}_n
- \left( \frac{\hat{q}_\alpha \hat{\sigma}_n^2}{n} \right)^{1/2}, \ 
\boldsymbol{x}^\top \tilde{\boldsymbol{\theta}}_n
+  \left( \frac{\hat{q}_\alpha \hat{\sigma}_n^2}{n} \right)^{1/2}
\right].
\]
Then, under Assumptions~\ref{assump:sparsity}--\ref{assump:energy},
\[
\lim_{n\to\infty}\;
\Prob \left(
\boldsymbol{x}^\top \boldsymbol{\theta}
\in I_n(\boldsymbol{x})
\text{ for all }
\boldsymbol{x} \in \mathscr{A}
\right)
= 1-\alpha.
\]
As $T_\gamma$ is consistently estimated by $\hat{T}_n$, the band permits post hoc confidence statements for the regression function $\boldsymbol{x}^\top\boldsymbol{\theta}$ for any $\boldsymbol{x}$ whose support is contained in $\hat{T}_n$.
\end{example}

\begin{remark}
Replacing the Euclidean norm with a covariance-weighted norm yields a Hotelling-type statistic, which corresponds to a Wald test for the effective parameter $\boldsymbol{\theta}_{T_\gamma}$ and converges to $\chi^2(s_\gamma)$. 
Although such weighting simplifies the null distribution, we use the Euclidean norm to retain a direct connection with the $\ell^2$ weak convergence in Theorem~\ref{thm:ell2 weak}.
\end{remark}

\section{Proofs}\label{sec:proof}

\subsection{Proofs for Section~\ref{sec:model}}

\begin{proof}[Proof of Lemma~\ref{lem:selection}]
It suffices to show that 
$\hat{T}_n = T_\gamma$
on the event $\{\|\hat{\boldsymbol{\theta}}_n - \boldsymbol{\theta}_0\|_\infty \leq r_n\}$.

For every $j \in \hat{T}_n^c$, 
we have $|\hat{\theta}_{nj}| \leq \tau_n$ and 
$|\theta_{0j}| - |\hat{\theta}_{nj}|
\leq |\hat{\theta}_{nj} - \theta_{0j}|
\leq r_n$.
Then, we have $|\theta_{0j}| \leq r_n + \tau_n \leq \gamma$, which implies that $j \in T_\gamma^c$.
Therefore, we obtain $\hat{T}_n^c \subset T_\gamma^c$, or equivalently, 
$T_\gamma \subset \hat{T}_n$.

It holds that 
$T_\gamma^c = T_W \cup T_0^c$.
For every $j \in T_0^c$, we have 
$ |\hat{\theta}_{nj}|
= |\hat{\theta}_{nj}-\theta_{0j}|
\leq r_n \leq \tau_n$, which implies that $j \in \hat{T}_n^c$.
Similarly, for every 
$j \in T_W$, it holds that 
$|\hat{\theta}_{nj}|
\leq r_n + |\theta_{0j}|
\leq r_n + \eta
\leq \tau_n$,
which implies that 
$j \in \hat{T}_n^c$.
In conclusion, we have 
$T_\gamma^c \subset \hat{T}_n^c$.
\end{proof}

\begin{proof}[Proof of Lemma~\ref{lem:matrix-approx}]
The conclusion follows from the law of large numbers for triangular arrays.
\end{proof}

\begin{proof}[Proof of Theorem~\ref{thm:ell2 weak}]
It suffices to establish the following (i) and (ii); see, for example, Chapter 1.8 of \cite{vanwellner2023}.
(i) For every $\zeta > 0$,
$\Prob (\|n^{1/2} (\tilde{\boldsymbol{\theta}}_n - \boldsymbol{\theta}_0)_{T_\gamma^c}\|_2^2 > \zeta ) \to 0$, which implies that $\{\boldsymbol{\mathcal{R}}_n \}_{n \in \mathbb{N}}$ is asymptotically finite-dimensional.
(ii) For every $\boldsymbol{u} \in \ell^2$,
$\langle \boldsymbol{u}, \boldsymbol{\mathcal{R}}_n \rangle
\to^d \langle \boldsymbol{u}, \boldsymbol{\mathcal{R}} \rangle$.

(i).
It holds that 
\begin{align*}
& \Prob\left(
\|n^{1/2} (\tilde{\boldsymbol{\theta}}_n - \boldsymbol{\theta}_0)_{T_\gamma^c}\|_2^2 > \zeta 
\right) \\
& \leq \Prob\left(
n \| (\tilde{\boldsymbol{\theta}}_n - \boldsymbol{\theta}_0)_{T_\gamma^c}\|_2^2 > \zeta , \
\hat{T}_n = T_\gamma \right) 
+ \Prob(\hat{T}_n \neq T_\gamma) .
\end{align*}
The bound
\[
n \| (\tilde{\boldsymbol{\theta}}_n - \boldsymbol{\theta}_0)_{T_\gamma^c}\|_2^2
= n \sum_{j \in T_\gamma^c} |\theta_{0j}|^2 
\leq
n s_0 \eta^2
\]
holds on the event $\{\hat{T}_n = T_\gamma\}$ and hence,
\[
\Prob 
\left(
n \|(\tilde{\boldsymbol{\theta}}_n - \boldsymbol{\theta}_0)_{T_\gamma^c}\|_2^2 > \zeta, \ \hat{T}_n = T_\gamma \right)
\leq 
\Prob\left( n s_0 \eta^2 > \zeta, \ \hat{T}_n = T_\gamma\right) 
\to 0.
\]
Moreover, Lemma~\ref{lem:selection} ensures $\Prob(\hat{T}_n \neq T_\gamma) \to 0$.

(ii).
Fix $\boldsymbol{u} \in \ell^2$.
From
$\langle \boldsymbol{u}_{T_\gamma^c}, \boldsymbol{\mathcal{R}}_{n T_\gamma^c} \rangle \leq \|\boldsymbol{u}_{T_\gamma^c}\|_2 \|\boldsymbol{\mathcal{R}}_{n T_\gamma^c}\|_2$
and $\|\boldsymbol{\mathcal{R}}_{n T_\gamma^c}\|_2 = o_p(1)$, the latter being a direct consequence of (i), we obtain
\[
\langle \boldsymbol{u}, \boldsymbol{\mathcal{R}}_n \rangle
= \langle \boldsymbol{u}_{T_\gamma}, \boldsymbol{\mathcal{R}}_{n T_\gamma} \rangle
+\langle \boldsymbol{u}_{T_\gamma^c}, \boldsymbol{\mathcal{R}}_{n T_\gamma^c} \rangle
= \langle \boldsymbol{u}_{T_\gamma}, \boldsymbol{\mathcal{R}}_{n T_\gamma} \rangle
+ o_p(1)
.
\]
Hence it suffices to show the convergence in distribution of $\langle \boldsymbol{u}_{T_\gamma}, \boldsymbol{\mathcal{R}}_{n T_\gamma} \rangle$.
To this end, we apply a Lyapunov type central limit theorem.
For $T \subset [p]$, let
\[
\psi_{nT}(\boldsymbol{\theta}_T)
= n^{-1} \sum_{i=1}^n \bm{J}_{T, T}^{-1} \boldsymbol{Z}_{i T}(Y_i - \boldsymbol{\theta}_T^\top \boldsymbol{Z}_{i T})
\]
and then
\[ n^{1/2}\psi_{n T_\gamma}(\boldsymbol{\theta}_{0T_\gamma}) 1_{\{\hat{T}_n = T_\gamma\}}
= \bm{J}_{T_\gamma, T_\gamma}^{-1} \hat{\boldsymbol{J}}_{n \hat{T}_n, \hat{T}_n} n^{1/2} (\tilde{\boldsymbol{\theta}}_n - \boldsymbol{\theta}_0)_{\hat{T}_n}1_{\{\hat{T}_n = T_\gamma\}} .\]
Since 
\[ \|\hat{\boldsymbol{J}}_{n T_\gamma, T_\gamma} - \boldsymbol{J}_{T_\gamma, T_\gamma} \|_{\mathrm{op}} \leq \|\hat{\boldsymbol{J}}_{n T_\gamma, T_\gamma} - \boldsymbol{J}_{nT_\gamma, T_\gamma} \|_{\mathrm{op}} + \|\boldsymbol{J}_{n T_\gamma, T_\gamma} - \boldsymbol{J}_{T_\gamma, T_\gamma} \|_{\mathrm{op}} = o_p(1) \]
and 
\[ 1_{\{\hat{T}_n = T_\gamma\}}=1+o_p(1),\]
we obtain
\[
n^{1/2} \boldsymbol{u}_{T_\gamma}^\top \psi_{n T_\gamma}(\boldsymbol{\theta}_{0T_\gamma})  1_{\{\hat{T}_n = T_\gamma\}}
= \boldsymbol{u}_{T_\gamma}^\top \boldsymbol{J}_{T_\gamma, T_\gamma}^{-1} \hat{\boldsymbol{J}}_{n T_\gamma, T_\gamma} n^{1/2} (\tilde{\boldsymbol{\theta}}_n - \boldsymbol{\theta}_0)_{T_\gamma}1_{\{\hat{T}_n = T_\gamma\}}
\]
and
\[
n^{-1/2} \sum_{i=1}^n \boldsymbol{u}_{T_\gamma}^\top \boldsymbol{J}_{T_\gamma, T_\gamma}^{-1} \boldsymbol{Z}_{i T_\gamma}\epsilon_i 
= \boldsymbol{u}_{T_\gamma}^\top {n}^{1/2} (\tilde{\boldsymbol{\theta}}_n - \boldsymbol{\theta}_0)_{T_\gamma} + o_p(1).
\]
Moreover,
\begin{eqnarray*}
n^{-1} \sum_{i=1}^n \boldsymbol{u}_{T_\gamma}^\top  \boldsymbol{J}_{T_\gamma, T_\gamma}^{-1} \boldsymbol{Z}_{i T_\gamma} \boldsymbol{Z}_{i T_\gamma}^\top \boldsymbol{J}_{T_\gamma, T_\gamma}^{-1} \boldsymbol{u}_{T_\gamma} \epsilon_i^2
\to^p \sigma^2 \boldsymbol{u}_{T_\gamma}^\top \boldsymbol{J}_{T_\gamma, T_\gamma}^{-1} \boldsymbol{u}_{T_\gamma} .
\end{eqnarray*}
As $s_\gamma$ is independent of $n$ and $p$, 
$\E[|\boldsymbol{u}_{T_\gamma}^\top \boldsymbol{J}_{T_\gamma, T_\gamma}^{-1} \boldsymbol{Z}_{1 T_\gamma}|^{2+\delta}] = O(1)$ and $\E[|\epsilon_{1}|^{2+\delta}] = O(1)$ for any $\delta>0$.
Thus,
\begin{align*}
&\sum_{i=1}^n \E[|
n^{-1/2} \boldsymbol{u}_{T_\gamma}^\top \boldsymbol{J}_{T_\gamma, T_\gamma}^{-1} \boldsymbol{Z}_{i T_\gamma}\epsilon_i |^{2 + \delta}
] \\
&\leq 
n^{-\delta/2}\E[|\boldsymbol{u}_{T_\gamma}^\top \boldsymbol{J}_{T_\gamma, T_\gamma}^{-1} \boldsymbol{Z}_{1 T_\gamma}|^{2+\delta}]\E[ |\epsilon_{1} |^{2+\delta}]
= o(1).
\end{align*}
Therefore,
\[ \langle \boldsymbol{u}_{T_\gamma}, \boldsymbol{\mathcal{R}}_{n T_\gamma} \rangle 
= {n}^{1/2} \boldsymbol{u}_{T_\gamma}^\top \psi_{n T_\gamma}(\boldsymbol{\theta}_{0 T_\gamma}) + o_p(1)
\to^d \mathcal{N}(0, \sigma^2 \boldsymbol{u}_{T_\gamma}\boldsymbol{J}_{T_\gamma, T_\gamma}^{-1} \boldsymbol{u}_{T_\gamma}) .\]
\end{proof}

\subsection{Proofs for Section~\ref{sec:appli1}}

\begin{proof}[Proof of Corollary~\ref{cor:test asymptotic distribution}]
Suppose that $H_0$ is true.
Define a linear operator $\mathcal{T}_A: \ell^2 \to \ell^2$ by
\[
\langle \boldsymbol{e}_j, \mathcal{T}_A(\boldsymbol{x}) \rangle 
= \begin{cases}
\langle \boldsymbol{a}_j, \boldsymbol{x} \rangle & (j \in [q])\\
0 & (j \in \mathbb{N}\setminus[q])
\end{cases}
\]
for $\boldsymbol{x} \in \ell^2$, where $\boldsymbol{a}_j$ is the $j$-th row of $\boldsymbol{A}$.
Consider an $\ell^2$-valued random variable $\boldsymbol{\mathcal{R}}_{n A} := \mathcal{T}_A(\boldsymbol{\mathcal{R}_n})$.
Then, it follows from Theorem \ref{thm:ell2 weak} and the continuous mapping theorem that 
$\boldsymbol{\mathcal{R}}_{n A} \to^d \boldsymbol{\mathcal{R}}_A$ in $\ell^2$, where $\boldsymbol{\mathcal{R}}_A = \mathcal{T}_A (\boldsymbol{\mathcal{R}})$ is an $\ell^2$-valued random variable satisfying 
$\langle \boldsymbol{u}, \boldsymbol{\mathcal{R}}_A \rangle \sim \mathcal{N}(0, \sigma^2 \boldsymbol{u}_{T_\gamma}^\top \boldsymbol{\Sigma}_A \boldsymbol{u}_{T_\gamma})$ for any $\boldsymbol{u} \in \ell^2$.
Consequently,  
\[W_n = \|\boldsymbol{\mathcal{R}}_{nA}\|_2^2 \to^d \|\boldsymbol{\mathcal{R}}_{A}\|_2^2.\]
Since
$\|\boldsymbol{\mathcal{R}}_{A}\|_2^2 =^d \|\boldsymbol{{R}}_A\|_2^2$ with $\boldsymbol{R}_A \sim \mathcal{N}_q(\boldsymbol{0}, \sigma^2 \boldsymbol{\Sigma}_A)$, a spectral decomposition of $\boldsymbol{\Sigma}_A$ yields
\[ \|\boldsymbol{\mathcal{R}}_A\|_2^2 =^d
\sigma^2 \sum_{j=1}^r \Lambda_j \chi_{1,j}^2. \]
\end{proof}

\begin{proof}[Proof of Lemma~\ref{lem:sigma_estimation}]
Decompose $\hat{\sigma}_n^2$ as 
$\hat{\sigma}_n^2 = S_{n1} -2 S_{n2} + S_{n3}$, where
\begin{align*}
&S_{n1} = n^{-1} \sum_{i=1}^n \epsilon_i^2,\\
&S_{n2} = n^{-1} \sum_{i=1}^n (\tilde{\boldsymbol{\theta}}_n - \boldsymbol{\theta}_0)_{\hat{T}_n}^\top \boldsymbol{Z}_{i \hat{T}_n} \epsilon_i,\\
&S_{n3} = n^{-1} \sum_{i=1}^n \left\{
(\tilde{\boldsymbol{\theta}}_n - \boldsymbol{\theta}_0)_{\hat{T}_n}^\top \boldsymbol{Z}_{i \hat{T}_n}
\right\}^2.
\end{align*}
By the law of large numbers for triangular arrays, $S_{n1} \to^p \sigma^2$.
To prove $S_{n2} = o_p(1)$, it suffices to show that 
\[
\Prob\left(
n^{-1} \left|
\sum_{i=1}^n (\tilde{\boldsymbol{\theta}}_n - \boldsymbol{\theta}_0)_{\hat{T}_n}^\top \boldsymbol{Z}_{i \hat{T}_n} \epsilon_i
\right| > \zeta
\right)\to0,
\]
for any $\zeta>0$.
As in the proof of Theorem~\ref{thm:ell2 weak}, the left-hand side admits the bound
\begin{align*}
&\Prob \left( n^{-1} \left| \sum_{i=1}^n (\tilde{\boldsymbol{\theta}}_n - \boldsymbol{\theta}_0)_{T_\gamma}^\top \boldsymbol{Z}_{i T_\gamma} \epsilon_i \right| > \zeta, \ \hat{T}_n = T_\gamma\right) 
+ \Prob(\hat{T}_n \neq T_\gamma) \\
&\leq
\Prob\left(
n^{-1}\left|
\sum_{i=1}^n (\tilde{\boldsymbol{\theta}}_n - \boldsymbol{\theta}_0)_{T_\gamma}^\top \boldsymbol{Z}_{i T_\gamma} \epsilon_i
\right| > \zeta
\right) + \Prob(\hat{T}_n \neq T_\gamma).    
\end{align*}
Since $n^{1/2}\|\tilde{\boldsymbol{\theta}}_n - \boldsymbol{\theta}_0\|_2$ is $O_p(1)$ and $s_\gamma$ is fixed,
\begin{align*}
& n^{-1} \left|\sum_{i=1}^n (\tilde{\boldsymbol{\theta}}_n - \boldsymbol{\theta}_0)_{T_\gamma}^\top \boldsymbol{Z}_{i T_\gamma} \epsilon_i
\right| \\
& \leq \|(\tilde{\boldsymbol{\theta}}_n - \boldsymbol{\theta}_0)_{T_\gamma}\|_2
\left\| n^{-1} 
\sum_{i=1}^n  \boldsymbol{Z}_{i T_\gamma} \epsilon_i
\right\|_2
= o_p(1) O_p(1) = o_p(1).
\end{align*}
Moreover, Lemma~\ref{lem:selection} implies $\Prob(\hat{T}_n \neq T_\gamma) =o(1)$.
To prove $S_{n3} = o_p(1)$, it suffices to show that 
\[
n^{-1} \sum_{i=1}^n \left\{
(\tilde{\boldsymbol{\theta}}_n - \boldsymbol{\theta}_0)_{T_\gamma}^\top \boldsymbol{Z}_{i T_\gamma}
\right\}^2 = o_p(1),
\]
which follows from
\begin{align*}
& n^{-1}  \sum_{i=1}^n \left\{
(\tilde{\boldsymbol{\theta}}_n - \boldsymbol{\theta}_0)_{T_\gamma}^\top \boldsymbol{Z}_{i T_\gamma}
\right\}^2 \\
& \leq \|(\tilde{\boldsymbol{\theta}}_n - \boldsymbol{\theta}_0)_{T_\gamma}\|_2^2 
n^{-1} \sum_{i=1}^n  \|\boldsymbol{Z}_{iT_\gamma} \|_2^2
= o_p(1) O_p(1) = o_p(1).
\end{align*}
\end{proof}

\begin{proof}[Proof of Lemma~\ref{lem:lambda_estimation}]
Let $\tilde{\Lambda}_{n,j}$ be the $j$-th largest eigenvalue of
\[ 
\tilde{\boldsymbol{\Sigma}}_{n A} = \boldsymbol{A}_{[q], T_\gamma} \hat{\boldsymbol{J}}_{n T_\gamma, T_\gamma}^{-1}\boldsymbol{A}_{[q], T_\gamma}^\top \]
for $j \in [r]$.
On the event $\{\hat{T}_n = T_\gamma\}$, we have $\hat{\Lambda}_{n, j} = \tilde{\Lambda}_{n, j}$, which implies
\[ \max_{j \in [r]}|\hat{\Lambda}_{n, j} - \Lambda_j|1_{\{\hat{T}_n = T_\gamma\}} 
=  \max_{j \in [r]}|\tilde{\Lambda}_{n, j} - \Lambda_j|1_{\{\hat{T}_n = T_\gamma\}} 
\leq \max_{j \in [r]}|\tilde{\Lambda}_{n, j} - \Lambda_j| . \]
By Weyl's inequality and Lemma~\ref{lem:matrix-approx}, 
\[ \max_{j \in [r]}|\tilde{\Lambda}_{n, j} - \Lambda_{j}|
\leq \|\tilde{\boldsymbol{\Sigma}}_{n A} - \boldsymbol{\Sigma}_{A} \|_{\mathrm{op}} 
\leq \|\tilde{\boldsymbol{\Sigma}}_{n A} - \boldsymbol{\Sigma}_{n A} \|_{\mathrm{op}}  + \|\boldsymbol{\Sigma}_{n A} - \boldsymbol{\Sigma}_A \|_{\mathrm{op}}  = o_p(1), \]
where $\boldsymbol{\Sigma}_{n A} =  \boldsymbol{A}_{[q], T_\gamma} \boldsymbol{J}_{n T_\gamma, T_\gamma}^{-1}\boldsymbol{A}_{[q], T_\gamma}^\top$.
\end{proof}

\begin{proof}[Proof of Theorem~\ref{thm:test_rule1}]
Suppose that $H_0$ is true.
It holds that
\begin{align*}
&\sum_{j=1}^{r} \Lambda_j \chi_{1,j}^2 - \sum_{j=1}^{\hat{r}_n} \hat{\Lambda}_j \chi_{1,j}^2 \\
&=
\left( \sum_{j=1}^{r} \Lambda_j \chi_{1,j}^2 - \sum_{j=1}^{\hat{r}_n} \hat{\Lambda}_j \chi_{1,j}^2 \right) 1_{\{\hat{T}_n = T_\gamma\}}
+ \left( \sum_{j=1}^{r} \Lambda_j \chi_{1,j}^2 - \sum_{j=1}^{\hat{r}_n} \hat{\Lambda}_j \chi_{1,j}^2 \right) 1_{\{\hat{T}_n \neq T_\gamma\}} \\
&=
\sum_{j=1}^{r} (\Lambda_j - \hat{\Lambda}_j) 1_{\{\hat{T}_n = T_\gamma\}} \chi_{1,j}^2 
+ \left( \sum_{j=1}^{r} \Lambda_j \chi_{1,j}^2 - \sum_{j=1}^{\hat{r}_n} \hat{\Lambda}_j \chi_{1,j}^2 \right) 1_{\{\hat{T}_n \neq T_\gamma\}}.
\end{align*}
By Lemma~\ref{lem:lambda_estimation},
\[
\left| \sum_{j=1}^{r} (\Lambda_j - \hat{\Lambda}_j) 1_{\{\hat{T}_n = T_\gamma\}} \chi_{1,j}^2  \right|
\leq \max_{j \in [r]} |\Lambda_j - \hat{\Lambda}_j|  1_{\{\hat{T}_n = T_\gamma\}} \sum_{j=1}^{r}  \chi_{1,j}^2 \to^p 0
\]
and, by Lemma~\ref{lem:matrix-approx},
\[
\left| \sum_{j=1}^{r} \Lambda_j \chi_{1,j}^2 - \sum_{j=1}^{\hat{r}_n} \hat{\Lambda}_j \chi_{1,j}^2 \right| 1_{\{\hat{T}_n \neq T_\gamma\}}
\to^p 0.
\]
Since $\|\boldsymbol{R}_A\|_2^2$ has a continuous distribution function, $\hat{c}_\alpha \to^p c_\alpha$ by Lemma~\ref{lem:lambda_estimation}.
Hence, by Lemma~\ref{lem:sigma_estimation} and Slutsky's lemma, 
\[ (W_n/\hat{\sigma}_n^2 , \hat{c}_\alpha)\to^d (\|\boldsymbol{R}_A\|_2^2/\sigma^2, c_\alpha).\]
As $\Prob(\|\boldsymbol{R}_A\|_2^2/\sigma^2 = c_\alpha)=0$, 
\[ \varphi_n \to^d 1_{\{\|\boldsymbol{R}_A\|_2^2/\sigma^2 > c_\alpha\}} \]
by the continuous mapping theorem.
Finally, by the Portmanteau theorem,
\[ \Prob(\varphi_n = 1) \to \Prob(\|\boldsymbol{R}_A\|_2^2 / \sigma^2 > c_\alpha) = \alpha. \]
\end{proof}

\subsection{Proofs for Section~\ref{sec:appli2}}

\begin{proof}[Proof of Lemma~\ref{lem:proj}]
Fix $\boldsymbol{a}\in\mathscr{A}$.
The orthogonal projection of $\tilde{\boldsymbol\theta}_n$ onto $H(\boldsymbol a)$ is
\[
\Pi_{H(\boldsymbol{a})} \tilde{\boldsymbol\theta}_n
= \tilde{\boldsymbol\theta}_n-
(\boldsymbol{a}^\top \tilde{\boldsymbol\theta}_n-c(\boldsymbol{a})) \boldsymbol{a}.
\]
Hence,
\[ \| \tilde{\boldsymbol{\theta}}_n -  \Pi_{H(\boldsymbol{a})} \tilde{\boldsymbol{\theta}}_n \|_2^2 
= (\boldsymbol{a}^\top \tilde{\boldsymbol{\theta}}_n - c(\boldsymbol{a}))^2 
= \{\boldsymbol{a}^\top (\tilde{\boldsymbol{\theta}}_n - \boldsymbol{\theta}_0 )\}^2. \]
Since $\operatorname{supp}(\boldsymbol{a}) \subset T_\gamma$ and $\|\boldsymbol a\|_2=1$ for any $\boldsymbol{a} \in \mathscr{A}$, taking the supremum over $\boldsymbol{a} \in \mathscr{A}$ yields 
\[ 
\sup_{\boldsymbol{a}\in \mathscr{A}} \{ \boldsymbol{a}^\top (\tilde{\boldsymbol{\theta}}_n - \boldsymbol{\theta}_0 )\}^2 
= \sup_{\boldsymbol{a}\in \mathscr{A}} \{ \boldsymbol{a}^\top \Pi_{T_\gamma}(\tilde{\boldsymbol\theta}_n-\boldsymbol\theta_0)\}^2 
= \| \Pi_{T_\gamma} ( \tilde{\boldsymbol{\theta}}_n - \boldsymbol{\theta}_0) \|_2^2. \]
\end{proof}

\begin{proof}[Proof of Corollary~\ref{cor:test2_asydist}]
Define a linear operator $\mathcal{T}_\gamma: \ell^2 \to \ell^2$
by 
\[
\langle \boldsymbol{e}_j, \mathcal{T}_\gamma(\boldsymbol{x}) \rangle
= \begin{cases}
\langle \boldsymbol{e}_j, \boldsymbol{x} \rangle & (j \in T_\gamma) \\
0 & (j \in \mathbb{N}\setminus T_\gamma)
\end{cases}
\]
for $\boldsymbol{x} \in \ell^2$.
From Theorem \ref{thm:ell2 weak} and the continuous mapping theorem,
$\mathcal{T}_\gamma(\boldsymbol{\mathcal{R}}_n) \to^d \mathcal{T}_\gamma(\boldsymbol{\mathcal{R}})$ in $\ell^2$.
The remaining steps are analogous to those in the proof of Corollary~\ref{cor:test asymptotic distribution}.
\end{proof}

\begin{proof}[Proof of Theorem~\ref{thm:test_rule2}]
Suppose that the family $\mathcal{F}$ is true.
As in Lemma~\ref{lem:lambda_estimation},
\[ \max_{j \in [s_\gamma]}|\hat{\lambda}_{n, j} - \lambda_j| 1_{\{\hat{T}_n = T_\gamma\}} \to^p 0.\]
Due to Lemma~\ref{lem:matrix-approx}, it implies that
\[
\sum_{j=1}^{s_\gamma} \lambda_j \chi_{1,j}^2 - \sum_{j=1}^{\hat{s}_\gamma} \hat{\lambda}_j \chi_{1,j}^2 
\to^p 0.
\]
The rest of the proof is straightforward by applying Slutsky's lemma and the continuous mapping theorem, as in the proof of Theorem~\ref{thm:test_rule1}
\end{proof}

\section*{Acknowledgement}
This work was supported by Japan Society for the Promotion of Science KAKENHI Grant Numbers 25K07133 (KT) and 26K17032 (KF).

\bibliographystyle{chicago}
\bibliography{FT3-ref}

\end{document}